\documentclass[11pt,reqno]{amsart}

\usepackage[T1]{fontenc}
\usepackage{mathpazo}             
\usepackage{microtype}            
\usepackage[margin=1.10in]{geometry}

\usepackage{amsmath,amssymb,amsfonts}
\usepackage{mathtools}
\usepackage{mathrsfs}

\usepackage{graphicx}
\usepackage{float} 
\usepackage{booktabs}
\usepackage{tikz}
\usetikzlibrary{
  arrows.meta,
  calc,
  positioning,
  patterns,
  patterns.meta,
  decorations.pathreplacing,
  intersections,
  fit
}

\usepackage{xcolor}
\usepackage[
  colorlinks=true,
  linkcolor=black,
  citecolor=blue!65!black,
  urlcolor=blue!65!black
]{hyperref}

\newtheorem{theorem}{Theorem}[section]
\newtheorem{proposition}[theorem]{Proposition}
\newtheorem{lemma}[theorem]{Lemma}
\newtheorem{corollary}[theorem]{Corollary}

\theoremstyle{definition}
\newtheorem{definition}[theorem]{Definition}

\theoremstyle{remark}
\newtheorem{remark}[theorem]{Remark}

\numberwithin{equation}{section}

\def \Div {\mathrm{div}}

\def \R {\mathbb{R}}

\def \dist {\mathrm{dist}}
\def \loc {\mathrm{loc}}

\def \interior{\mathrm{int}}

\newcommand{\avgint}{\mathchoice{\vcenter{\hbox{$\displaystyle\int$}}\kern-1.15em-}{\vcenter{\hbox{$\textstyle\int$}}\kern-0.95em-}{\int\kern-0.75em-}{\int\kern-0.70em-}}

\title[Tumor growth models with nutrient consumption]{Tumor growth models with nutrient consumption}

\author[D. J. Ara\'ujo]{Dami\~ao J. Ara\'ujo}
\address{Department of Mathematics, Universidade Federal da Para\'iba, 58059-900, Jo\~ao Pessoa-PB, Brazil}{}
\email{araujo@mat.ufpb.br}

\author[R. Teymurazyan]{Rafayel Teymurazyan}
\address{Applied Mathematics and Computational Sciences (AMCS), Compu\-ter, Electrical and Mathematical Sciences and Engineering Division (CEMSE), King Abdullah University of Science and Technology (KAUST), Thuwal, 23955-6900, Kingdom of Saudi Arabia}{}
\email{rafayel.teymurazyan@kaust.edu.sa} 

\author[J.M.~Urbano]{Jos\'{e} Miguel Urbano}
\address{Applied Mathematics and Computational Sciences (AMCS), Compu\-ter, Electrical and Mathematical Sciences and Engineering Division (CEMSE), King Abdullah University of Science and Technology (KAUST), Thuwal, 23955-6900, Kingdom of Saudi Arabia; and CMUC, Department of Mathematics, University of Coimbra, 3000-143 Coimbra, Portugal}{} 
\email{miguel.urbano@kaust.edu.sa} 

\subjclass[2020]{Primary 35Q92, 92-10; Secondary 35R35, 35K40, 35B65} 

\keywords{Tumor growth modeling, free boundaries, optimal growth estimates}

\begin{document}

\begin{abstract}
We study the saturation free boundary of the cell density and its interaction with the nutrient depletion interface in a coupled parabolic model for tumor growth with sublinear reactions. We establish a sharp quadratic--superquadratic growth dichotomy along the saturation interface and quantify the degeneration of quadratic profiles as they approach the joint saturation--depletion boundary. We further show that intrinsic regularity of the saturation interface can occur only at joint free boundary points, where half-space blow-ups of either component determine a unique joint blow-up profile. Finally, we prove a local backward coincidence between the joint and nutrient free boundaries near regular points, which yields spatial and temporal regularity of the joint interface.
\end{abstract}

\maketitle

\tableofcontents

\section{Introduction}

In this paper, we study a coupled system of equations that arises as a prototype for tumor growth models with nutrient consumption. Our model is closely related to the class of tumor growth models studied in \cite{DP21, JKT23, MRCS14, PQV14, PTV14}, where the tumor cell density evolves through a pressure mechanism, while the nutrient concentration diffuses and is consumed by the tumor (see also \cite{DPSV26} and the references therein). We focus on the sublinear regime
\begin{equation}\label{cafenenon}
\left\{
\begin{array}{rcr}
\rho_t-\Div(\rho Dp) &=& \rho v^\gamma \\
v_t-\Delta v &=& -\rho v^\theta,
\end{array}
\right.
\end{equation}
where $\rho\geq0$ is the cell density and $v\geq0$ the nutrient concentration, while $p$ denotes the pressure governing the tumor dynamics. In particular, we are interested in the geometry of the tumor interface for sublinear reaction exponents $\gamma, \theta \in (0,1)$. Sublinear absorptions are relevant because they enable the formation of full-cell saturation and nutrient-depleted regions and, consequently, nontrivial dead-cores. This leads to an interaction between the cell density and the nutrient concentration, whose diffusion is determined by cell saturation and nutrient depletion.

A further connection with the classical tumor growth framework arises from the Hele--Shaw limit of porous-medium models, $p=\rho^m$, as $m\to\infty$. The system was studied for finite $m>0$ in \cite{KMMUS97, K97, M04}. In this setting, the density satisfies the saturation constraint $\rho\le1$, and the free boundary $\partial^-\{\rho<1\}$ separates the saturated and unsaturated phases. Near this set, $\rho$ remains bounded away from zero, so the diffusion coefficient $\rho^m$ is strictly positive and the first equation in \eqref{cafenenon} is locally uniformly parabolic. In this context, freezing the coefficients  and introducing the density defect,
$$
u:=1-\rho,
$$
leads to the prototype system
\begin{equation}\label{the problem}
\left\{
\begin{array}{rcr}
\Delta u-u_t &=& v^\gamma\\
\Delta v-v_t &=& v^\theta.
\end{array}
\right.
\end{equation}
Related systems have been extensively studied in the elliptic setting (see, for example, \cite{ALT16, AT24, AT26, DT20, PTU26, T16}). For the parabolic setting, we refer to \cite{AFSW22}, where the regularity of the free boundary for a semilinear problem is investigated.

The main objective of this paper is to investigate the geometric and analytical properties of the density defect $u$ and its \textit{saturated interface}
$$
\Gamma_u:=\partial^-\{u>0\}.
$$
To this end, it is crucial to understand how the behavior of $u$ near $\Gamma_u$ is influenced by the nutrient close to the \textit{depletion interface} $\Gamma_v:=\partial^-\{v>0\}$.
Indeed, the forcing term $v^\gamma$ in the first equation of \eqref{the problem} links the behavior of the density defect to the vanishing rate of $v$ near $\Gamma_v$. Consequently, the local behavior of the nutrient near its free boundary plays a fundamental role in the following approach. This motivates the study of the growth of the pair $(u,v)$ associated with the scaling invariance of \eqref{the problem}, and of the \textit{joint free boundary}
$$
\Gamma:=\partial^-\{|(u,v)|>0\},
$$
where
$$
|(u,v)|:=u^{1/\alpha}+v^{1/\beta},
\quad \mbox{with} \quad
\alpha=2+\gamma\beta,
\quad 
\beta=\frac{2}{1-\theta}.
$$
We first show that $\Gamma_u$ admits a sharp dichotomy into quadratic and superquadratic growth regimes. More precisely, the saturation interface can be decomposed as
$$
\Gamma_u= \mathcal Q \cup \Gamma,
$$
such that, at points of $\mathcal Q:=\Gamma_u\setminus\Gamma$, the nutrient concentration remains strictly positive and the density defect admits a nontrivial quadratic blow-up. In contrast, at joint free boundary points $\Gamma$, the higher intrinsic growth estimate
$$
\sup_{Q_r(z_0)}u\le Cr^\alpha
$$
holds. We further characterize the joint interface through the vanishing of the spatial Hessian and obtain a quantitative decay estimate for the Hessian at points of the nonjoint saturation set $\mathcal Q$
approaching $\Gamma$. These results provide insight into how the behavior of the density defect degenerates near the joint interface. 

Next, we obtain nondegeneracy, space-time porosity, and positive density estimates, showing, in particular, that $\Gamma$ has zero space-time Lebesgue measure. Moreover, denoting by $\mathcal R_u$ and $\mathcal R_v$ the regular sets associated with $\Gamma_u$ and $\Gamma_v$, respectively, we prove that
$$
\mathcal R_u=\Gamma\cap\mathcal R_v=:\mathcal R.
$$
The latter allows us to extend the notion of regular points introduced for the nutrient interface in \cite{AFSW22} to the joint free boundary. Furthermore, we prove that a half-space blow-up of either component uniquely determines the entire joint blow-up,
whose intrinsic size exhibits a linear half-space profile
$$
|(u_{r,z_0},v_{r,z_0})|
\longrightarrow C_{\theta,\gamma}(x\cdot\nu)_+,
$$
for some $C_{\theta,\gamma}>0$. 

Finally, we establish regularity of the joint interface near points of $\mathcal R$. By combining the nutrient free boundary regularity theory of \cite{AFSW22} with
a new backward coincidence argument,
we prove that the joint and nutrient
free boundaries coincide locally in a backward neighborhood. Consequently, the joint interface inherits spatial $C^{1,\sigma}$ and temporal half-H\"older regularity, for some $\sigma\in(0,1)$.

\begin{figure}[ht]
\centering

\begin{tikzpicture}[
    >=Latex,
    font=\small,
    box/.style={
        draw,
        rounded corners=3pt,
        align=center,
        inner sep=8pt,
        text width=5.1cm
    },
    smallbox/.style={
        draw,
        rounded corners=3pt,
        align=center,
        inner sep=7pt,
        text width=4.3cm
    },
    regular/.style={
        draw,
        very thick,
        rounded corners=4pt,
        align=center,
        inner sep=8pt,
        text width=5.4cm
    },
    singular/.style={
        draw,
        dashed,
        rounded corners=3pt,
        align=center,
        inner sep=7pt,
        text width=3.8cm
    },
    arr/.style={
        ->,
        thick
    }
]


\node[box, text width=4.3cm] (Gu) at (0,0) {
    $\Gamma_u$ \textit{cell saturation} \\[4pt]
    \textit{interface}
};


\node[box] (Q) at (-4.0,-3.1) {
    $\mathcal Q:=\Gamma_u\setminus\Gamma$\\[4pt]
    \textit{quadratic regime}
};

\node[box] (G) at (4.0,-3.1) {
    $\Gamma\,$ \textit{joint free boundary}\\[4pt]
    \textit{superquadratic regime} 
};

\coordinate (fork1) at (0,-1.55);

\draw[thick] (Gu.south) -- (fork1);
\draw[thick] (fork1) -| (Q.north);
\draw[thick] (fork1) -| (G.north);

\node[
    font=\footnotesize\itshape,
    fill=white,
    inner sep=2pt
] at (-4.0,-1.55)
    {positive nutrient $v$};

\node[
    font=\footnotesize\itshape,
    fill=white,
    inner sep=2pt
] at (4.0,-1.55)
    {nutrient depletion $\Gamma_v$} ;


\node[
    font=\footnotesize\itshape
] at (2.1,-4.8)
    {\color{gray}free boundary regularity};

\node[box] (R) at (4.0,-6.5) {
    $\mathcal R
      :=\mathcal R_u
      =\Gamma\cap\mathcal R_v
    $ \\[5pt]
    \textit{regular part of $\Gamma$}
};

\draw[thick] (G.south) -- (R.north);


\node[
    font=\footnotesize\itshape,
    below=3pt of R
] {\color{gray}$C^{1,\sigma}$ \textit{in space and} $C^{0,1/2}$ \textit{in time}};

\end{tikzpicture}

\caption{
Schematic organization of the free-boundary analysis.
}
\label{fig:growth-dichotomy-roadmap}
\end{figure}

The paper is organized as follows. Section \ref{s2} introduces the parabolic setting, weak solutions, and intrinsic scaling. Section \ref{s4} establishes the qualitative coupling properties between the saturation and depletion interfaces. Section \ref{s3} develops optimal growth estimates and the sharp growth dichotomy
along $\Gamma_u$. Section \ref{s5} proves nondegeneracy,
positive density, and porosity of the joint interface.
Section \ref{s6} investigates joint blow-ups, establishes a parabolic Liouville theorem, and characterizes the common
regular set $\mathcal R$. Finally, Section \ref{s10} proves the local backward coincidence of the joint and
nutrient interfaces and establishes the regularity of the joint free boundary at points of the common regular set $\mathcal R$.

\section{Preliminaries}\label{s2}

In this section, we introduce the relevant notation, specify the notion of weak solution, and describe the intrinsic scaling of the system.

\subsection*{The parabolic setting}
Throughout this paper, we assume $n\ge2$. We write $w_t:=\partial_t w$, and $(u,v)\ge0$
means $u,v\ge0$. Let $B_r(x)$ denote the open ball of radius $r$
centered at $x$, and set $B_r:=B_r(0)$. For $z_0=(x_0,t_0)$ we use
$$
\begin{aligned}
Q_r^-(z_0)&:=B_r(x_0)\times(t_0-r^2,t_0],\\
Q_r^+(z_0)&:=B_r(x_0)\times[t_0,t_0+r^2),\\
Q_r(z_0)&:=B_r(x_0)\times(t_0-r^2,t_0+r^2).
\end{aligned}
$$
We write $Q_r^-:=Q_r^-(0,0)$, $Q_r^+:=Q_r^+(0,0)$,
and $Q_r:=Q_r(0,0)$. Thus, $Q_r$ always denotes the
\emph{two-sided} cylinder, while $Q_r^-$ and $Q_r^+$
denote the backward and forward cylinders, respectively.
Throughout, the system is posed in $Q_1=B_1\times(-1,1)$.
For the backward cylinder, we set
$$
\partial_pQ_r^-:=(\partial B_r\times(-r^2,0])
\cup(B_r\times\{-r^2\}),
\qquad
\overline{Q_r^-(z_0)}:=\overline{B_r(x_0)}\times[t_0-r^2,t_0].
$$
The notation $\partial_pQ_r^-$ at a general center $z_0$
means the corresponding translated parabolic boundary. To avoid conflating past accessibility with the ordinary topology,
we write $\partial^-\{w>0\}$ for the \emph{backward free boundary}
of a continuous nonnegative function $w$:
$$
\partial^-\{w>0\}:=
\bigl\{z_0:w(z_0)=0,\quad
Q_r^-(z_0)\cap\{w>0\}\ne\varnothing
\text{ for every sufficiently small }r>0\bigr\}.
$$
We set
$$
\Gamma_u:=\partial^-\{u>0\},\qquad
\Gamma_v:=\partial^-\{v>0\},\qquad
\Gamma:=\partial^-\{|(u,v)|>0\}.
$$
We use $\partial$ for ordinary space-time boundaries. The \emph{backward closure} of the set $E$ is defined by
$$
\operatorname{cl}^-E
:=\{z_0:Q_r^-(z_0)\cap E\ne\varnothing
\text{ for all sufficiently small }r>0\}.
$$
In particular, for a continuous nonnegative function,
$\partial^-\{w>0\}=\{w=0\}\cap\operatorname{cl}^-\{w>0\}$.
The sets above are contained in the corresponding ordinary free boundaries, but the converse is not true in general. At nutrient regular points, the ordinary interface is a two-sided graph, and its phase separation identifies its backward trace with $\Gamma_v$. Compactness and local uniform convergence are understood in the ordinary space-time topology. Finally, for a measurable subset $E$ of $\R^d$, $|E|$ denotes its $d$-dimensional Lebesgue measure. Distances used for localization are taken with respect to the parabolic metric
$$
d_p((x,t),(y,s)):=\max\{|x-y|,|t-s|^{1/2}\}.
$$

\subsection*{Weak solutions}
Solutions of the system
\begin{equation}\label{NewSystem}
\left\{
\begin{array}{rcl}
\Delta u-u_t & = & v^\gamma \quad \text{in}\ Q_1\\
\Delta v-v_t & = & v^\theta \quad \, \text{in}\ Q_1
\end{array}
\right.
\end{equation}
are understood in the weak sense. More precisely, we consider functions 
$$
u,v\in C_{\mathrm{loc}}((-1,1);L^2_{\mathrm{loc}}(B_1))
\cap L^2_{\mathrm{loc}}((-1,1);H^1_{\mathrm{loc}}(B_1)),
$$
such that, for $w=u,v$, the equation
$$
\partial_t w-\Delta w=-F_w,\qquad
F_u=v^\gamma,\quad F_v=v^\theta,
$$
holds in the sense of distributions in $Q_1$.
Equivalently, for every $\varphi\in C_c^\infty(Q_1)$,
$$
\iint_{Q_1}\left(-w\,\partial_t\varphi
+D w\cdot D\varphi+F_w\varphi\right)\,dx\,dt=0.
$$
The triangular structure allows one first to solve the scalar nutrient equation and then the linear equation for the density defect. Nonnegativity of the density defect requires compatible data and is assumed throughout. Since the right-hand sides are bounded, standard interior parabolic $L^p$ estimates and embedding imply that both
components have locally H\"older-continuous representatives (see, for example, \cite{LSU68, RTU19}). In particular, their values and the free boundaries are well defined pointwise.

\subsection*{The intrinsic scaling} 
We conclude this section by analyzing the homogeneity of the system, which is determined by its invariance under parabolic scaling. More precisely, for $r>0$ and $z_0:=(x_0,t_0)$, we define
\begin{equation}\label{blow-ups}
    u_{r,z_0}(x,t):=\frac{1}{r^\alpha}u(x_0+rx,t_0+r^2t)
    \quad \mbox{and} \quad 
    v_{r,z_0}(x,t):=\frac{1}{r^\beta}v(x_0+rx,t_0+r^2t).
\end{equation}
Then, the second equation is invariant provided  
\begin{equation}\label{definition of beta}
    \beta=\frac{2}{1-\theta}.
\end{equation}
The invariance of the first equation then dictates
\begin{equation}\label{definition of alpha}
\alpha=\frac{2(1-\theta+\gamma)}{1-\theta}.
\end{equation}
In particular, $\alpha=\beta$ if and only if $\gamma=\theta$. We introduce the intrinsic norm
\begin{equation}\label{intrinsic norm}
    |(u,v)|:=u^{\frac{1}{\alpha}}+v^{\frac{1}{\beta}}.
\end{equation}

\section{Coupling of free boundaries}\label{s4}

In this section, we derive coupling properties for the nodal and positivity sets of $u$ and $v$, as well as for the associated free boundaries, leading to several geometric properties of the joint free boundary. 
\begin{proposition}\label{coupling lemma}
If $(u,v)\ge0$ is a solution of \eqref{NewSystem}, then
$$
\operatorname{int} \{u=0\}\subset \operatorname{int} \{v=0\},
\quad  
 \Gamma_u\subset\operatorname{cl}^-\{v>0\}, \quad
 \mbox{and}
\quad \Gamma_v \subset \operatorname{cl}^-\{u>0\}.
$$
\end{proposition}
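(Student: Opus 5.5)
The plan is to prove the three inclusions separately. The first follows from the equation for $u$; the other two follow from it by short topological arguments using continuity and the definition of the backward closure.

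\emph{Step 1: $\operatorname{int}\{u=0\}\subset\operatorname{int}\{v=0\}$.} Let $U$ be an open set on which $u\equiv0$. Testing the weak formulation with $\varphi\in C_c^\infty(U)$, every term involving $u$ vanishes. We are left with $\iint v^\gamma\varphi=0$ for all such $\varphi$. Hence $v^\gamma=0$ a.e. in $U$, and by continuity $v\equiv0$ in $U$. Therefore $U\subset\{v=0\}$, and since $U$ is open, $U\subset\operatorname{int}\{v=0\}$.

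\emph{Step 2: $\Gamma_u\subset\operatorname{cl}^-\{v>0\}$.} Argue by contradiction. Take $z_0\in\Gamma_u$ and suppose $v\equiv0$ on $Q_r^-(z_0)$ for some small $r$. Then on the open cylinder $B_r(x_0)\times(t_0-r^2,t_0)$ the function $u$ is a nonnegative solution of the heat equation. By the definition of $\Gamma_u$, $u(z_0)=0$. By continuity, $u$ extends as a caloric function up to $t=t_0$, so it is a nonnegative caloric function on $B_r(x_0)\times(t_0-r^2,t_0]$ vanishing at the top interior point $z_0$. The strong maximum principle (equivalently, the minimum principle, or Harnack) for nonnegative caloric functions then forces $u\equiv0$ on $B_r(x_0)\times(t_0-r^2,t_0]$. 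This contradicts the requirement $Q_\rho^-(z_0)\cap\{u>0\}\neq\varnothing$ for all small $\rho$.

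The main technical point is to justify the minimum principle at the terminal time $t_0$, which lies on the closed top of the cylinder. To handle it, first note that $u$ is caloric in $B_r(x_0)\times(t_0-r^2,t_0+\epsilon)$ cannot be assumed, since forward values are unknown. Instead, apply the strong minimum principle on $B_r(x_0)\times(t_0-r^2,t_0]$, which holds for weak caloric functions continuous up to the top. If one prefers, use the Harnack inequality, $\sup_{Q^-_{r/2}(x_0,t_0-r^2/4)}u\le C\,u(z_0)=0$, together with a covering argument to propagate zero values backward throughout the cylinder. One can also avoid the top-time issue entirely by shrinking $r$ and using interior points $(x_0,t)$ with $t<t_0$, then applying continuity.

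\emph{Step 3: $\Gamma_v\subset\operatorname{cl}^-\{u>0\}$.} Again argue by contradiction. Let $z_0\in\Gamma_v$ and suppose $u\equiv0$ on $Q_r^-(z_0)$. The open part $B_r(x_0)\times(t_0-r^2,t_0)$ is contained in $\operatorname{int}\{u=0\}$, so by Step 1 it lies in $\{v=0\}$. Since $Q_\rho^-(z_0)$ minus its top slice is contained in that open part, and $v$ is continuous, $v\equiv0$ on $Q_\rho^-(z_0)$ for every $\rho<r$. This contradicts $z_0\in\partial^-\{v>0\}$, which requires $v>0$ somewhere in every such cylinder. (Points on the top slice are limits of points below it, so a positive value there would give positive values below by continuity.)
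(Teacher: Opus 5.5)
Your proof is correct and follows the same route as the paper. The first equation forces $v\equiv0$ wherever $u$ vanishes on an open set; the strong maximum principle for the nonnegative caloric function $u$ on the backward cylinder gives the $\Gamma_u$ inclusion; and the first equation, applied again, gives the $\Gamma_v$ inclusion. Your extra care with the weak formulation and the top time slice is sound, since the strong minimum principle does hold at the terminal time. Note only that the side remark about avoiding the top time via points $(x_0,t)$ with $t<t_0$ is not self-standing, because you have no a priori zero there without Harnack. The main argument does not need it.
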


\begin{proof} 
If $z_0\in\operatorname{int}\{u=0\}$ in the ordinary
space-time topology, then $u\equiv0$ in a two-sided cylinder
$Q_\rho(z_0)$ for some $\rho>0$. The first equation gives
$v^\gamma=0$ there, hence $v\equiv0$ in $Q_\rho(z_0)$.
This proves the first inclusion.

Assume now that the second inclusion fails, i.e., there exists $(x_0,t_0)\in\Gamma_u$ such that
$$
(x_0,t_0)\notin\operatorname{cl}^-\{v>0\}.
$$
Therefore, for $\rho>0$ small,
$$
Q_\rho^-(x_0,t_0)\cap\{v>0\}=\emptyset,
$$
i.e., $v\equiv0$ in $Q_\rho^-(x_0,t_0)$. Thus, 
$$
\Delta u-u_t=0 \quad \mbox{in} \ Q_\rho^-(x_0,t_0).
$$
On the other hand, as $(x_0,t_0)\in\Gamma_u$, then $u(x_0,t_0)=0$. The parabolic strong maximum principle implies that $u\equiv0$, which contradicts $(x_0,t_0)\in\Gamma_u$.

Similarly, to conclude the third inclusion, assume that for some $z_0\in\Gamma_v$ one has $u\equiv0$ in some $Q_\rho^-(z_0)$. Then the first equation forces $v\equiv0$ there, contradicting the assumption that $z_0\in\Gamma_v$.
\end{proof}

\begin{proposition}\label{coupling prop}
    If $(u,v)\ge 0$ is a solution of \eqref{NewSystem}, then
    $$
    \Gamma=\Gamma_u\cap\{v=0\}=\Gamma_v\cap\{u=0\}.
    $$
\end{proposition}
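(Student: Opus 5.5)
The plan is to unwind the definitions using the identity $\partial^-\{w>0\}=\{w=0\}\cap\operatorname{cl}^-\{w>0\}$, valid for continuous nonnegative $w$. The first fact to record is $\{|(u,v)|>0\}=\{u>0\}\cup\{v>0\}$ and $\{|(u,v)|=0\}=\{u=0\}\cap\{v=0\}$, since both components are nonnegative. Next, the backward closure distributes over unions: $\operatorname{cl}^-(A\cup B)=\operatorname{cl}^-A\cup\operatorname{cl}^-B$. Indeed, if $z_0$ lies in neither $\operatorname{cl}^-A$ nor $\operatorname{cl}^-B$, there are arbitrarily small radii $r$ (and then, by monotonicity of the cylinders $Q_r^-(z_0)$ in $r$, all small $r$) with $Q_r^-(z_0)\cap A=Q_r^-(z_0)\cap B=\varnothing$. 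Combining these,
$$
\Gamma=\{u=0\}\cap\{v=0\}\cap\bigl(\operatorname{cl}^-\{u>0\}\cup\operatorname{cl}^-\{v>0\}\bigr).
$$

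For the first equality, I will intersect $\Gamma_u=\{u=0\}\cap\operatorname{cl}^-\{u>0\}$ with $\{v=0\}$, which immediately gives $\Gamma_u\cap\{v=0\}\subset\Gamma$. For the reverse inclusion, I take $z_0\in\Gamma$ and show $z_0\in\operatorname{cl}^-\{u>0\}$. Either this already holds, or $z_0\in\operatorname{cl}^-\{v>0\}$, which together with $v(z_0)=0$ means $z_0\in\Gamma_v$. In that case the third inclusion of Proposition \ref{coupling lemma}, $\Gamma_v\subset\operatorname{cl}^-\{u>0\}$, gives what is needed.

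The second equality is symmetric. The inclusion $\Gamma_v\cap\{u=0\}\subset\Gamma$ is immediate. For the reverse, a point $z_0\in\Gamma$ either lies in $\operatorname{cl}^-\{v>0\}$, and hence in $\Gamma_v$, or lies in $\operatorname{cl}^-\{u>0\}$, and hence in $\Gamma_u$. In the latter case, the second inclusion of Proposition \ref{coupling lemma}, $\Gamma_u\subset\operatorname{cl}^-\{v>0\}$, places $z_0$ in $\operatorname{cl}^-\{v>0\}$, so again $z_0\in\Gamma_v$. Since $u(z_0)=0$, the point lies in $\Gamma_v\cap\{u=0\}$.

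The only step requiring care is the distributivity of $\operatorname{cl}^-$ over unions, given the quantifier ``for all sufficiently small $r$''. Monotonicity of $r\mapsto Q_r^-(z_0)\cap E$ reduces this to ``for some $r$'', which settles it. Everything else follows directly from Proposition \ref{coupling lemma}.
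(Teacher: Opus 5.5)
Your proof is correct and is essentially the paper's argument: the paper shows that a point of $\Gamma$ lies in both $\Gamma_u$ and $\Gamma_v$ by re-running two mechanisms directly. The first equation forces $v\equiv0$ wherever $u\equiv0$ in a backward cylinder, and the strong maximum principle forces $u\equiv0$ wherever $v\equiv0$; these are exactly the third and second inclusions of Proposition~\ref{coupling lemma} that you invoke, with your identity $\operatorname{cl}^-(A\cup B)=\operatorname{cl}^-A\cup\operatorname{cl}^-B$ making explicit the case split the paper's contradiction arguments use implicitly.
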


\begin{proof}
    Let $z_0=(x_0,t_0)\in\Gamma$. By definition,
    $$
    u(z_0)=v(z_0)=0.
    $$
    We claim that $z_0$ belongs to both $\Gamma_u$ and $\Gamma_v$.
    Indeed, the failure of $z_0\in\Gamma_u$ would imply that
    $u$ vanishes in a sufficiently small backward cylinder centered at $z_0$. The first equation then yields $v\equiv0$ there, contradicting
    $z_0\in\Gamma$.

    Similarly, the failure of $z_0\in\Gamma_v$ would imply that $v$ vanishes locally. Consequently, $u$ is a nonnegative caloric function attaining its minimum at $z_0$. By the parabolic strong maximum principle, $u$ vanishes in a sufficiently small backward cylinder, again contradicting $z_0\in\Gamma$. We have therefore established
    $$
    \Gamma\subset\Gamma_u\cap\{v=0\}\quad\text{and}\quad\Gamma\subset\Gamma_v\cap\{u=0\}.
    $$
    The reverse inclusions follow directly from the definitions. Every point of $\Gamma_u\cap\{v=0\}$ is a common zero of $u$ and $v$ approached by points where $|(u,v)|\geq u^{\frac{1}{\alpha}}>0$, and hence belongs to $\Gamma$. The same reasoning applies to $\Gamma_v\cap\{u=0\}$, completing the proof.
\end{proof}
\begin{corollary}\label{coupling corollary}
If $(u,v)\geq0$ is a solution of \eqref{NewSystem}, then
$$
\Gamma=\Gamma_u\cap\Gamma_v.
$$
In particular,
$$
\Gamma\subset
\operatorname{cl}^-\{u>0\}
\cap
\operatorname{cl}^-\{v>0\}.
$$
\end{corollary}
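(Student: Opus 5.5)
The corollary follows directly from Proposition \ref{coupling prop} and the definitions; the plan is to prove the two inclusions for $\Gamma=\Gamma_u\cap\Gamma_v$ and then read off the second claim.

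\emph{Step 1: $\Gamma\subset\Gamma_u\cap\Gamma_v$.} By Proposition \ref{coupling prop}, $\Gamma=\Gamma_u\cap\{v=0\}$ gives $\Gamma\subset\Gamma_u$. Likewise, $\Gamma=\Gamma_v\cap\{u=0\}$ gives $\Gamma\subset\Gamma_v$.

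\emph{Step 2: $\Gamma_u\cap\Gamma_v\subset\Gamma$.} Take $z_0\in\Gamma_u\cap\Gamma_v$. Since $z_0\in\Gamma_v$, the backward free boundary definition gives $v(z_0)=0$. Hence $z_0\in\Gamma_u\cap\{v=0\}$, and Proposition \ref{coupling prop} says this set equals $\Gamma$.

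\emph{Step 3: the inclusion in closures.} For any continuous nonnegative $w$ we have $\partial^-\{w>0\}=\{w=0\}\cap\operatorname{cl}^-\{w>0\}$. In particular $\Gamma_u\subset\operatorname{cl}^-\{u>0\}$ and $\Gamma_v\subset\operatorname{cl}^-\{v>0\}$. Intersecting these and using $\Gamma=\Gamma_u\cap\Gamma_v$ gives
$$
\Gamma\subset\operatorname{cl}^-\{u>0\}\cap\operatorname{cl}^-\{v>0\}.
$$
The same conclusion also follows from Proposition \ref{coupling lemma}, since $\Gamma_u\subset\operatorname{cl}^-\{v>0\}$ and $\Gamma_v\subset\operatorname{cl}^-\{u>0\}$.

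No step is a real obstacle. The only point needing care is that membership in a backward free boundary already forces the function to vanish at the point; that is exactly what lets us upgrade $\Gamma_u\cap\Gamma_v$ to $\Gamma_u\cap\{v=0\}$ in Step 2.
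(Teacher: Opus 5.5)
Your proof is correct and follows the paper's approach. The paper gives no separate proof of this corollary: it is read off directly from Proposition~\ref{coupling prop}, using that membership in $\Gamma_v$ forces $v=0$, and the closure inclusion follows from $\partial^-\{w>0\}=\{w=0\}\cap\operatorname{cl}^-\{w>0\}$, exactly as in your Steps 1--3.
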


\section{Sharp growth dichotomy}\label{s3}

In this section, we study the local behavior of nonnegative bounded solutions of \eqref{NewSystem}, and the geometric structure of the associated joint free boundary. We begin by establishing optimal growth estimates for both components near the free boundary. We then explore the coupling between the two equations, establishing
relations between the zero sets, positivity sets, and corresponding free boundaries of $u$ and $v$.

\subsection*{Optimal growth estimates} 
We begin by establishing an improvement of flatness mechanism for solutions vanishing at a dead-core point, showing that, after a suitable
normalization, sufficiently flat solutions become even flatter at smaller scales.

\begin{lemma}\label{flattening lemma}
    For any given $\mu>0$, there exists $\kappa>0$, depending only on $\mu$, $\gamma$, $\theta$ and $n$, such that if $(u,v)\ge0$ solves
    \begin{equation*}
    \left\{
    \begin{array}{rcr}
    \Delta u-u_t& = &v^\gamma \quad \text{in}\ Q_1^-\\
    \Delta v-v_t& = &\delta v^\theta \quad \text{in}\ Q_1^-,
    \end{array}
    \right.
\end{equation*}
    for $\delta\in(0,\kappa)$, and
    \begin{equation}\label{conditions}
        \|u\|_{L^\infty(Q_1^-)}\le 1,\quad \|v\|_{L^\infty(Q_1^-)} \le 1,\quad u(0,0)=v(0,0)=0,
    \end{equation}
    then
    $$
    \sup_{Q_{1/2}^-}|(u,v)|\le\mu.
    $$
\end{lemma}
\begin{proof}
   Indeed, if not, then for a certain $\mu_0>0$, there exist sequences $u_i, v_i$ satisfying \eqref{conditions}, such that in $Q_1^-$ one has
   \begin{equation*}
    \left\{
    \begin{array}{rcr}
    \Delta u_i-(u_i)_t& = &v_i^\gamma\\[1mm]
    \Delta v_i-(v_i)_t& = &i^{-1}v_i^\theta,
    \end{array}
    \right.
\end{equation*}
but 
\begin{equation}\label{contradictory assumption}
    \sup_{Q_{1/2}^-}|(u_i,v_i)|>\mu_0.
\end{equation}
By classical parabolic regularity theory, $\{(u_i,v_i)\}_{i\ge1}$ is uniformly bounded and equicontinuous in compact subsets of $Q_1^-$. Therefore, up to a subsequence, $u_i,v_i$ converge locally uniformly in $Q_{2/3}^-$ to $u_\infty$ and $v_\infty$, respectively, as $i\to\infty$. Clearly, $u_\infty,v_\infty$ satisfy \eqref{conditions}, and $v_\infty$ is a caloric function in $Q_{2/3}^-$, i.e.,
$$
\Delta v_\infty-(v_\infty)_t=0.
$$
Since $v_\infty\ge0$ attains its minimum at the interior point $(0,0)$, the parabolic strong maximum principle implies $v_\infty\equiv0$ in $Q_{2/3}^-$. The latter, in turn, yields
$$
\Delta u_\infty-(u_\infty)_t=0.
$$
Using the strong maximum principle again, we get $u_\infty\equiv0$ in $Q_{2/3}^-$, which contradicts \eqref{contradictory assumption}.
\end{proof}
Iterating, we derive the optimal growth estimate for nonnegative bounded solutions of \eqref{NewSystem}.

\begin{theorem}\label{optimal growth theorem}
    If $(u,v)\ge0$ is a bounded solution of \eqref{NewSystem}, then there exists a constant $C>0$, depending only on $\gamma$, $\theta$, $\|u\|_\infty$, $\|v\|_\infty$ and $n$, such that, for each $(x_0,t_0)\in\Gamma\cap Q_{1/2}^-$, there holds
    $$
    \sup_{Q_r^-(x_0,t_0)}|(u,v)|\le Cr,
    $$
    for $Q_r^-(x_0,t_0)\subset Q_1^-$. In particular, in $Q_r^-(x_0,t_0)$ one has
    $$
    u\le Cr^{\frac{2(1-\theta+\gamma)}{1-\theta}}\qquad\text{and}\qquad v\le Cr^{\frac{2}{1-\theta}}.
    $$
\end{theorem}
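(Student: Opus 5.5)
The plan is to iterate the flattening Lemma~\ref{flattening lemma} along dyadic scales, in the standard discrete way. First normalize. Given a bounded solution, replace $(u,v)$ by
$$\tilde u(x,t)=\frac{u(x_0+\lambda x,t_0+\lambda^2t)}{M},\qquad \tilde v(x,t)=\frac{v(x_0+\lambda x,t_0+\lambda^2t)}{M^{\beta/\alpha}},$$
with $M$ large and $\lambda$ small. Since $\alpha=2+\gamma\beta$, the equation for $\tilde u$ keeps coefficient $\lambda^2 M^{-1}M^{\gamma\beta/\alpha}\le1$ in front of $\tilde v^\gamma$ (a coefficient $\le1$ is harmless). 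The nutrient equation acquires the factor $\delta=\lambda^2M^{(\beta/\alpha)(\theta-1)}$, which we make smaller than the $\kappa$ of the lemma. This makes \eqref{conditions} hold at the origin, since $z_0\in\Gamma$ gives $u(z_0)=v(z_0)=0$. If the lemma literally requires coefficient $1$ in the $u$-equation, I would note that its compactness proof works verbatim with any coefficient in $[0,1]$.

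Fix $\mu:=2^{-1}$ in the intrinsic sense, chosen so that $|(u,v)|\le\mu$ implies $u\le 2^{-\alpha}$ and $v\le 2^{-\beta}$. Concretely, pick $\mu$ with $\mu^\alpha\le2^{-\alpha}$ and $\mu^\beta\le 2^{-\beta}$, i.e. $\mu=1/2$. Let $\kappa$ be the resulting constant. The inductive claim is
$$\sup_{Q^-_{2^{-k}}}|(\tilde u,\tilde v)|\le 2^{-k}\quad\text{for all }k\ge0.$$
For the inductive step, set $u_k(x,t)=2^{k\alpha}\tilde u(2^{-k}x,2^{-2k}t)$ and $v_k(x,t)=2^{k\beta}\tilde v(2^{-k}x,2^{-2k}t)$. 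The induction hypothesis gives \eqref{conditions} for $(u_k,v_k)$ in $Q_1^-$, with $u_k(0,0)=v_k(0,0)=0$. By the scaling invariance \eqref{blow-ups}–\eqref{definition of alpha}, $(u_k,v_k)$ solves the same system with the same $\delta$ (and the same coefficient $\le1$ for $u$). Indeed, $\beta$ makes the $v$-equation invariant, and $\alpha=2+\gamma\beta$ does the same for the $u$-equation. The lemma then yields $\sup_{Q^-_{1/2}}|(u_k,v_k)|\le 1/2$, which rescales to the claim at $k+1$.

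For a general $r$, choose $k$ with $2^{-k-1}<r\le2^{-k}$ to get $\sup_{Q_r^-}|(\tilde u,\tilde v)|\le 2r$. Undoing the normalization costs constants depending only on $M,\lambda$, hence only on $\|u\|_\infty,\|v\|_\infty,\gamma,\theta,n$. Scales with $r\ge\lambda$ are trivial because of boundedness. The pointwise bounds $u\le Cr^\alpha$ and $v\le Cr^\beta$ follow immediately from the definition \eqref{intrinsic norm}, and they match the stated exponents.

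The main obstacle is the normalization step. We must check that a single choice of $(M,\lambda)$ reduces both equations simultaneously to the lemma's hypotheses, with a small $\delta$ for $v$ and a coefficient at most $1$ for $u$, while keeping the sup-norms at most $1$. We also need the lemma's constant $\kappa$ to remain uniform under the iteration. The latter is automatic because the rescaling leaves $\delta$ unchanged, but it must be verified carefully using the exact relation $\alpha-2=\gamma\beta$.
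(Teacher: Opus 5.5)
Your proposal is correct and follows the paper's proof: normalize so that Lemma~\ref{flattening lemma} applies with small $\delta$, iterate dyadically using the intrinsic-scaling invariance ($\alpha-2=\gamma\beta$, $\beta-2=\theta\beta$), and interpolate between dyadic radii. The only real difference is the normalization. The paper uses $u/A^\gamma$, $v/A$, which keeps the coefficient of $v^\gamma$ exactly $1$ (with $\delta=A^{\theta-1}$), so the lemma is applied verbatim. Your choice $(u/M,\,v/M^{\beta/\alpha})$ with a spatial rescaling $\lambda$ instead gives a coefficient $\lambda^2M^{-2/\alpha}\le1$. It therefore needs your remark that the compactness proof tolerates coefficients in $[0,1]$, which is valid. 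In exchange, your choice makes the intrinsic norm scale by the single factor $M^{1/\alpha}$, and it handles the localization around $(x_0,t_0)$ explicitly.
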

\begin{proof}
Without loss of generality, we assume $(x_0,t_0)=(0,0)$. 
Take $\mu:=\frac{1}{2}$ and let $\kappa>0$ be as in Lemma \ref{flattening lemma}. For
    $$
    A>\max\left\{1,\|u\|_\infty^{\frac{1}{\gamma}}, \|v\|_\infty, \kappa^{\frac{1}{\theta-1}}\right\},
    $$
    define
    $$
    u_1:=\frac{u}{A^\gamma}\qquad\text{and}\qquad v_1:=\frac{v}{A}.
    $$
    Then $(u_1,v_1)$ satisfies \eqref{conditions} and
    \begin{equation}\label{iteration}
    \left\{
    \begin{array}{rcr}
    \Delta u_1-(u_1)_t& = &v_1^\gamma\\[1mm]
    \Delta v_1-(v_1)_t& = &\delta v_1^\theta,
    \end{array}
    \right.
\end{equation}
with $\delta:=A^{\theta-1}<\kappa$. By Lemma \ref{flattening lemma},
\begin{equation}\label{the first step}
    \sup_{Q_{1/2}^-}|(u_1,v_1)|\le\mu.
\end{equation}
We now iterate. For any $i\in\mathbb{N}$, set
\begin{align*}
    u_i(x,t)&:=2^{(i-1)\alpha}u_1(2^{1-i}x,2^{2(1-i)}t),\\
    v_i(x,t)&:=2^{(i-1)\beta}v_1(2^{1-i}x,2^{2(1-i)}t),
\end{align*}
where $\alpha$ and $\beta$ are defined by \eqref{definition of alpha} and \eqref{definition of beta}, respectively. Note that $(u_i,v_i)$ solves \eqref{iteration} and $u_i(0,0)=v_i(0,0)=0$. We aim to prove that
\begin{equation}\label{induction}
    \sup_{Q_{1/2}^-}|(u_i,v_i)|\le\mu,\quad i\in\mathbb{N}.
\end{equation}
Indeed, for $i=1$, this is \eqref{the first step}. We argue by induction, supposing \eqref{induction} is true for $i$, and deducing that the same holds for $i+1$. By \eqref{induction}, 
$$
u_i\le\mu^\alpha\qquad \text{and}\qquad v_i\le\mu^\beta\quad \text{in}\ Q_{1/2}^-.
$$
Now, if $(x,t)\in Q_1^-$, then $(\frac{x}{2},\frac{t}{4})\in Q_{1/2}^-$, therefore
$$
u_{i+1}(x,t) = 2^\alpha u_i\left(\frac{x}{2},\frac{t}{4}\right)\le2^\alpha\mu^\alpha=1.
$$
Similarly,
$$
v_{i+1}(x,t)\le1.
$$
Thus, $(u_{i+1},v_{i+1})$ satisfies \eqref{conditions}. Applying Lemma \ref{flattening lemma} to $(u_{i+1},v_{i+1})$, we obtain \eqref{induction} for $i+1$. Scaling back, for any $i\in\mathbb{N}$, we get
\begin{equation}\label{scaling back}
    \sup_{Q_{2^{-i}}^-}|(u_1,v_1)|\le\mu2^{-(i-1)}.
\end{equation}
Let now $0<r\le\frac{1}{2}$ and choose $i\in\mathbb{N}$ such that
$$
2^{-(i+1)}<r\le2^{-i}.
$$
Then $Q_r^-\subset Q_{2^{-i}}^-$ and since $2^{-(i-1)}<4r$, from \eqref{scaling back} one has
$$
\sup_{Q_r^-}|(u_1,v_1)|\le4\mu r=2r.
$$
The latter, recalling the definition of $(u_1,v_1)$, for any $r\in(0,\frac{1}{2}]$, yields
$$
\sup_{Q_r^-}|(u,v)|\le Cr,
$$
where
$$
C:=2\max\left\{A^{\frac{\gamma}{\alpha}}, A^\frac{1}{\beta}\right\}.
$$
\end{proof}
\begin{remark}[Localization of backward growth]
\label{backward growth localization}
The estimate of Theorem~\ref{optimal growth theorem}, stated for points
in $\Gamma\cap Q_{1/2}^-$, is local: translation, parabolic
rescaling, and the same normalization argument show that, for every
compact $K\Subset Q_1$, there exist $C_K$ and $r_K>0$ such that
$$
\sup_{Q_r^-(z_0)}u\le C_Kr^\alpha,
\qquad
\sup_{Q_r^-(z_0)}v\le C_Kr^\beta,
\qquad z_0\in\Gamma\cap K,\quad r\leq r_K.
$$
This localized form is used below, now including joint points with positive time coordinate.
\end{remark}

\begin{proposition}[Two-sided optimal growth]\label{two sided growth}
If $(u,v)\ge0$ is a bounded solution of \eqref{NewSystem}, then, for every compact set $K\Subset Q_1$, there exist $C>0$ and $r_K>0$, depending only on $K$, $n$, $\gamma$, $\theta$, $\|u\|_{L^\infty(Q_1)}$ and $\|v\|_{L^\infty(Q_1)}$, such that, for every $z_0\in\Gamma\cap K$ and $0<r<r_K$,
\begin{equation}\label{two sided growth bound}
\sup_{Q_r(z_0)}u\le Cr^\alpha,\qquad\sup_{Q_r(z_0)}v\le Cr^\beta.
\end{equation}
\end{proposition}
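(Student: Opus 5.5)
The plan is to get the backward half of \eqref{two sided growth bound} from Remark~\ref{backward growth localization} and the forward half by comparison with explicit homogeneous barriers on forward cylinders. The starting data come from the backward estimate on the time slice $\{t=t_0\}$.

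\textbf{Step 1: backward bound and initial trace.} Fix $K\Subset Q_1$ and take $C_K$, $r_K$ from Remark~\ref{backward growth localization}. Shrinking $r_K$, we may assume $\overline{B_{\rho}(x_0)}\times[t_0-\rho^2,t_0+\rho^2]\subset Q_1$ for every $z_0\in\Gamma\cap K$, where $\rho:=r_K$. The Remark already controls $Q_r^-(z_0)$. Since the slice $B_r(x_0)\times\{t_0\}$ lies in $\overline{Q_r^-(z_0)}$, continuity applied at every radius $|x-x_0|\le r$ yields
$$
u(x,t_0)\le C_K|x-x_0|^\alpha,\qquad v(x,t_0)\le C_K|x-x_0|^\beta,\qquad |x-x_0|\le\rho .
$$

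\textbf{Step 2: barrier for $v$.} Write $s:=|x-x_0|^2+M(t-t_0)$ and set $\Phi:=c\,s^{\beta/2}$ on $D:=B_\rho(x_0)\times(t_0,t_0+\rho^2)$. The key computation uses $\beta/2-1=\beta\theta/2$, which is exactly \eqref{definition of beta}. It gives $\Phi^\theta=c^\theta s^{\beta/2-1}$ and $\Phi_t=c\frac{\beta}{2}Ms^{\beta/2-1}$. It also gives $\Delta\Phi\le c\,\beta(n+\beta-2)s^{\beta/2-1}$. Hence, for $M\ge 2(n+\beta)$, we get $\Phi_t-\Delta\Phi+\Phi^\theta\ge0$, so $\Phi$ is a supersolution of $w_t-\Delta w+w^\theta=0$ for every $c>0$. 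Now choose
$$
c\ge\max\{C_K,\ \|v\|_\infty\rho^{-\beta}\}.
$$
Then $\Phi\ge v$ on $B_\rho(x_0)\times\{t_0\}$ by Step 1, and $\Phi\ge c\rho^\beta\ge\|v\|_\infty$ on the lateral boundary. The comparison principle for $w_t-\Delta w+w^\theta=0$ applies because the absorption $w\mapsto w^\theta$ is nondecreasing, so the difference of a sub- and a supersolution is a subsolution of the heat equation on its positivity set. It gives $v\le\Phi$ in $D$. Therefore, on $B_r(x_0)\times[t_0,t_0+r^2)$, we obtain $v\le c(1+M)^{\beta/2}r^\beta$.

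\textbf{Step 3: barrier for $u$.} Since $u_t-\Delta u=-v^\gamma\le0$, $u$ is a subcaloric function, so it suffices to dominate it by a supercaloric barrier $\Psi:=c'(|x-x_0|^2+M'(t-t_0))^{\alpha/2}$. The same computation gives $\Psi_t-\Delta\Psi\ge c's^{\alpha/2-1}\bigl(\tfrac{\alpha}{2}M'-\alpha(n+\alpha-2)\bigr)\ge0$ for $M'$ large. Choose $c'\ge\max\{C_K,\|u\|_\infty\rho^{-\alpha}\}$. Then the classical weak maximum principle gives $u\le\Psi$ in $D$, and hence $u\le c'(1+M')^{\alpha/2}r^\alpha$ on the forward cylinder. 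Combining this with Step 1 yields \eqref{two sided growth bound}, with all constants depending only on $K,n,\gamma,\theta$ and the sup norms.

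\textbf{Main obstacle.} The delicate points are Step 1 and the justification of comparison for weak, merely H\"older solutions with the non-Lipschitz absorption $v^\theta$. For Step 1, the slice estimate must hold uniformly at all radii, which is guaranteed because the Remark holds for every $r\le r_K$. For the comparison, I would test the equation for $(v-\Phi)$ with $(v-\Phi)_+$ and use the monotonicity $v^\theta-\Phi^\theta\ge0$ on $\{v>\Phi\}$ to drop the reaction term, which gives the energy inequality directly.
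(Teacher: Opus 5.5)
Your proposal is correct and follows essentially the same route as the paper. Both arguments take the backward bounds from Remark~\ref{backward growth localization} and the resulting trace estimate on $\{t=t_0\}$, then compare on forward cylinders with the homogeneous barriers $c\bigl(|x-x_0|^2+M(t-t_0)\bigr)^{p/2}$, $p=\alpha,\beta$, where $M$ is of order $2(n+p-2)$. The only difference is in the $v$ step: you compare with a supersolution of the full absorption equation, whereas the paper only uses that $v$ is subcaloric and the barrier supercaloric, which makes your monotonicity argument for $v^\theta$ unnecessary.
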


\begin{proof}
The corresponding estimates in $Q_r^-(z_0)$ follow from
Remark~\ref{backward growth localization}.
Fix $z_0=(x_0,t_0)\in\Gamma\cap K$.
In particular, evaluating the backward estimates at time $t_0$
and choosing $r$ slightly larger than $|x-x_0|$ gives
\begin{equation}\label{initial trace growth}
u(x,t_0)\le C_0|x-x_0|^\alpha,
\qquad
v(x,t_0)\le C_0|x-x_0|^\beta,
\end{equation}
for $x$ in a fixed ball $B_{R_0}(x_0)$, with
$R_0$ chosen smaller than the localization radius in
Remark~\ref{backward growth localization}.
Choose $R_0>0$, depending only on $\dist_p(K,\partial Q_1)$,
so that $B_{R_0}(x_0)\times[t_0,t_0+R_0^2]$ lies in $Q_1$
for every $z_0\in K$. For $p>2$, $M>0$ and $t\ge t_0$ set
$$
\Phi_p(x,t):=M\bigl(|x-x_0|^2+\Lambda_p(t-t_0)\bigr)^{p/2},
\qquad \Lambda_p:=2(n+p-2).
$$
Writing $S:=|x-x_0|^2+\Lambda_p(t-t_0)$, we compute
$$
(\partial_t-\Delta)\Phi_p
=\frac{Mp}{2}S^{p/2-1}
\left(\Lambda_p-2n-2(p-2)\frac{|x-x_0|^2}{S}\right)\ge0,
$$
with the inequality understood by continuity where $S=0$.
Choose $M\ge C_0$ such that
$MR_0^p\ge\|u\|_{L^\infty(Q_1)}$ for $p=\alpha$, and
$MR_0^p\ge\|v\|_{L^\infty(Q_1)}$ for $p=\beta$.
The constants $M$ and $R_0$ are independent of $z_0\in K$.
By \eqref{initial trace growth}, the corresponding barrier dominates
its component on the initial time slice, and by the choice of $M$, it also dominates on the lateral boundary $|x-x_0|=R_0$.
Since
$$
(\partial_t-\Delta)u=-v^\gamma\le0,
\qquad
(\partial_t-\Delta)v=-v^\theta\le0,
$$
the parabolic comparison principle gives
$$
u(x,t)\le\Phi_\alpha(x,t),\qquad
v(x,t)\le\Phi_\beta(x,t),
$$
for $|x-x_0|<R_0$ and $t_0\le t\le t_0+R_0^2$. For $0<r<R_0$, this yields the required estimates in $Q_r^+(z_0)$. Combining them with the backward estimates gives \eqref{two sided growth bound}.
\end{proof}

\begin{remark}\label{r3.1}
    Theorem \ref{optimal growth theorem} is optimal. Indeed, the pair
    $$
    (u,v):=\left( a(x_1)_+^\alpha,b(x_1)_+^\beta \right),
    $$
    where $\alpha$ and $\beta$ are defined respectively by \eqref{definition of alpha} and \eqref{definition of beta}, and 
    $$
    a:=\frac{b^\gamma}{\alpha(\alpha-1)},\qquad b:=[\beta(\beta-1)]^{\frac{1}{\theta-1}},
    $$
    solves \eqref{NewSystem}.  At any $(x_0,t_0) \in \Gamma=\{x_1=0\}$,
    one has
    $$
    \sup_{Q_r^-(x_0,t_0)}u=ar^\alpha,\qquad\sup_{Q_r^-(x_0,t_0)}v=br^\beta.
    $$
\end{remark}

\subsection*{Sharp growth dichotomy}

We conclude this section by examining how the growth of the density defect changes along its free boundary $\Gamma_u$. The coupling between the two equations gives rise to a quadratic--superquadratic dichotomy: at points of $\Gamma_u\setminus\Gamma$, where the nutrient concentration is strictly positive, the function $u$ exhibits nontrivial quadratic growth. In contrast, at joint free boundary points, the vanishing nutrient concentration leads to the higher intrinsic growth of order $r^\alpha$, where $\alpha=2+\gamma\beta>2$. The following result captures these two regimes in a unified growth estimate. 
\begin{corollary}
\label{c4.6}
If $(u,v)\geq0$ is a bounded solution of
\eqref{NewSystem} in $Q_1$, then there exist universal constants $C>0$ and $r_0>0$, such that, for each $z_0\in \Gamma_u$, there holds
\begin{equation}\label{dichotomyestim}
\sup_{Q_r(z_0)}u
\leq
Cr^2\varphi(z_0,r),
\end{equation}
for $0<r<r_0$, where
$$
\varphi(z,r):=
\left\{
\begin{array}{rl}
   v^\gamma(z)+r^\gamma,  & \mbox{if } z\in \Gamma_u\setminus \Gamma\\[1mm]
   r^{\gamma\beta},  & \mbox{if } z\in \Gamma.
\end{array}
\right.
$$
\end{corollary}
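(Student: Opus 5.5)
The proof splits into the two cases in the definition of $\varphi$. When $z_0\in\Gamma$, Proposition~\ref{two sided growth} applied with a compact set containing $z_0$ gives $\sup_{Q_r(z_0)}u\le Cr^\alpha$. Since $\alpha=2+\gamma\beta$, this is exactly $Cr^2\,r^{\gamma\beta}=Cr^2\varphi(z_0,r)$. As stated, the constants depend on the compact set. I read "universal" as uniform for $z_0$ in a fixed interior compact set, say $\overline{Q_{1/2}}$, so $C$ and $r_0$ come from Proposition~\ref{two sided growth}.

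Now take $z_0\in\Gamma_u\setminus\Gamma$, so $u(z_0)=0$ and $v(z_0)>0$. The first step is to bound the right-hand side $f:=v^\gamma$ on $Q_r(z_0)$. Both components are locally H\"older continuous. In fact, $v$ solves a heat equation with bounded right-hand side, so $v$ is $C^{1,\cdot}$ in space and Lipschitz in the parabolic metric on interior compacts. This gives $v\le v(z_0)+C_1r$ on $Q_r(z_0)$. Since $\gamma\in(0,1)$, the map $s\mapsto s^\gamma$ is subadditive, hence
$$
\sup_{Q_r(z_0)}v^\gamma\le v^\gamma(z_0)+C_1^\gamma r^\gamma\le C\bigl(v^\gamma(z_0)+r^\gamma\bigr)=:C\varphi(z_0,r).
$$
Even plain H\"older continuity of $v$ with some exponent $\lambda$ would only give $r^{\gamma\lambda}$, so I will use the interior parabolic Lipschitz bound on $v$. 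This holds because $\Delta v-v_t=v^\theta\in L^\infty$, and $W^{2,1}_p$ estimates give $v\in C^{1+\lambda,(1+\lambda)/2}$.

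The second step is a quadratic growth estimate for the inhomogeneous heat equation at a zero of a nonnegative supersolution. Here $u\ge0$, $u(z_0)=0$ and $\Delta u-u_t=f$ with $0\le f\le M_r:=C\varphi(z_0,r)$ in $Q_{2r}(z_0)$. I would decompose $u=h+w$ on $Q_{2r}^-(z_0)$, where $w$ solves $\Delta w-w_t=f$ with zero parabolic boundary data and $h$ is caloric with $h=u\ge0$ on the parabolic boundary. The explicit barrier $M_r\bigl(4r^2-|x-x_0|^2\bigr)/(2n)$ is a supersolution of the problem for $-w$, and comparison gives $-Cr^2M_r\le w\le0$. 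Then $h=u-w\ge0$ with $h(z_0)=-w(z_0)\le Cr^2M_r$. The parabolic Harnack inequality then bounds $h$ on an earlier, shrunken cylinder by $Cr^2M_r$, which controls $u$ in a backward cylinder $Q^-_{r}(z_0-(0,cr^2))$.

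The main obstacle is that Harnack only controls the past, whereas we need $u$ on a neighborhood of $z_0$ in $Q_r^-(z_0)$ and also in the forward cylinder $Q_r^+(z_0)$. I would first apply the Harnack argument centered at the points $(x_0,t_0+\tau)$, using the forward sup-bound of $u$ along the time line. That bound comes from nonnegativity together with the comparison $u\le\Phi$-type barrier used in Proposition~\ref{two sided growth}, now with $\Phi=C(|x-x_0|^2+\Lambda(t-t_0))\,M_r$ plus a caloric correction. A simpler alternative is to bound $u$ on $Q_r(z_0)$ directly by a Taylor expansion in space and time. At a minimum point $z_0$ we have $Du(z_0)=0$. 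With $u\in C^{2+\lambda,1+\lambda/2}$ locally where $v>0$, this gives $|u|\le C r^2\|D^2u,u_t\|_{L^\infty(Q_r)}$. Interior estimates for $D^2u$ in terms of $\sup u$ and $\sup f$ on $Q_{2r}$ then close the bound via a standard iteration in dyadic scales, giving $\sup_{Q_r(z_0)}u\le Cr^2\varphi(z_0,r)$.
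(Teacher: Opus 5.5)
\textbf{There is a genuine gap in your case $z_0\in\Gamma_u\setminus\Gamma$: neither of your two routes produces the bound.} Your case $z_0\in\Gamma$ matches the paper, via Proposition~\ref{two sided growth} and $\alpha=2+\gamma\beta$. Your first step for $z_0\in\Gamma_u\setminus\Gamma$ is also the paper's ingredient: the parabolic Lipschitz bound on $v$ together with $|a^\gamma-b^\gamma|\le|a-b|^\gamma$.

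\begin{itemize}
\item \emph{Harnack route.} Harnack controls $u$ only on time slabs strictly before $t_0$, as you note. Your repair is circular. The barrier of Proposition~\ref{two sided growth} needs the initial-slice bound $u(\cdot,t_0)\le C|x-x_0|^2M_r$, and that is exactly the information Harnack at $z_0$ cannot reach.
\item \emph{Taylor route.} Interior estimates bound $\|D^2u\|_{L^\infty(Q_r)}+\|u_t\|_{L^\infty(Q_r)}$ by $Cr^{-2}\sup_{Q_{2r}}u$ plus lower-order terms. An $L^\infty$ Hessian bound also needs the H\"older seminorm of $f$, not only $\sup f$. So you get $S(r)\le C\,S(2r)+Cr^2\varphi(z_0,r)$ with $C\ge1$. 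This is not a contraction and does not iterate.
\end{itemize}
Without further information, the best bound on $D^2u$ near $z_0$ is a universal constant. That gives only $u\le Cr^2$, which is far weaker than $Cr^2(v^\gamma(z_0)+r^\gamma)$ when $v(z_0)$ is small. Small $v(z_0)$ is precisely the regime the corollary is about.

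\textbf{The missing idea is a pointwise identity at $z_0$.} Since $z_0$ is an interior two-sided minimum of $u\ge0$, you have $Du(z_0)=0$, $u_t(z_0)=0$ and $D_x^2u(z_0)\ge0$. The equation then gives $\operatorname{tr}D_x^2u(z_0)=v^\gamma(z_0)$, hence $0\le D_x^2u(z_0)\le v^\gamma(z_0)\,I$.

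Your Step 1 also already shows $v^\gamma\in C^{\gamma,\gamma/2}_{\loc}$ with universal bounds. These hold uniformly up to $\Gamma$, not merely ``where $v>0$''; restricting to $v>0$ would make the constants degenerate as $v(z_0)\to0$. Schauder estimates therefore give a universal $C^{2+\gamma,1+\gamma/2}$ bound on $u$. The parabolic Taylor expansion at $z_0$ then yields
\[
u(x,t)\le\tfrac12 v^\gamma(z_0)|x-x_0|^2+C\bigl(|x-x_0|+|t-t_0|^{1/2}\bigr)^{2+\gamma},
\]
which is the claim. No Harnack inequality or iteration is needed. In your own Taylor route, the fix is to bound $\|D^2u\|_{L^\infty(Q_r)}\le v^\gamma(z_0)+Cr^\gamma$ and $\|u_t\|_{L^\infty(Q_r)}\le Cr^\gamma$. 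Get these from the H\"older continuity of $D^2u$ and $u_t$ together with their values at $z_0$, not from $\sup_{Q_{2r}}u$.
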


\begin{proof}
We first establish the required interior
regularity. Since $v^\theta$ is bounded, standard interior parabolic $W^{2,1}_p$ estimates, followed by a parabolic Sobolev embedding with $p>n+2$, imply that $v$ is locally Lipschitz continuous with respect to the parabolic distance. More precisely, for every $K'\Subset Q_1$,
$$
|v(x,t)-v(y,s)|
\leq C_{K'}
\left(
|x-y|+|t-s|^{1/2}
\right),
$$
for $(x,t),(y,s)\in K'$ sufficiently close. Since $0<\gamma<1$ and $v\geq0$, we have
$$
|v(x,t)^\gamma-v(y,s)^\gamma|
\leq |v(x,t)-v(y,s)|^\gamma.
$$
Consequently, $v\in C_{\loc}^{\gamma,\gamma/2}(Q_1)$. Applying the interior parabolic Schauder estimates to the first equation in \eqref{NewSystem}, we obtain that $u$ is locally $C^{2+\gamma,1+\gamma/2}(Q_1)$,
with universal estimates. Fix now $z_0=(x_0,t_0)\in\Gamma_u$. Since $z_0$ is an interior minimum of $u$, one has
\begin{equation}\label{hessian}
u(z_0)=0, \quad D_xu(z_0)=0,\quad
u_t(z_0)=0,\quad \mbox{and} \quad A_{z_0}:=D_x^2u(z_0)\geq0.
\end{equation}
Suppose $z_0\in\Gamma_u\setminus\Gamma=\Gamma_u\setminus\Gamma_v$. The last equality follows from Corollary \ref{coupling corollary}. Since $v(z_0)>0$,
$$
\operatorname{tr}A_{z_0}
=v^\gamma(z_0)>0,
$$
and hence
$$
0\le\langle A_{z_0}\xi,\xi\rangle\le\operatorname{tr}A_{z_0}\,|\xi|^2=v^\gamma(z_0)|\xi|^2,
$$
for every $\xi\in\R^n$. Furthermore, the parabolic Taylor expansion of $u$ at $z_0$ yields
$$
u(x,t)=\frac12\langle A_{z_0}(x-x_0),x-x_0\rangle+R_{z_0}(x,t),
$$
where
$$
|R_{z_0}(x,t)|\le C\left(|x-x_0|+|t-t_0|^{1/2}\right)^{2+\gamma}.
$$
The latter, in view of \eqref{hessian}, yields
$$
u(x,t)\le\frac12 v^\gamma(z_0)|x-x_0|^2+C\left(|x-x_0|+|t-t_0|^{1/2}\right)^{2+\gamma}.
$$
Therefore, 
$$
\sup_{Q_r(z_0)}u\le C\left(v^\gamma(z_0) r^2+r^{2+\gamma}\right).
$$
If $z_0\in\Gamma$, then \eqref{dichotomyestim} follows from Proposition \ref{two sided growth}.
\end{proof}
The next result shows that, as points of $\Gamma_u\setminus\Gamma$ approach the joint free boundary $\Gamma$, quadratic profiles degenerate at a quantitative rate.

\begin{corollary}\label{c4.7}
If $(u,v)\geq0$ is a bounded solution of \eqref{NewSystem}, then 
$$
\Gamma=\Gamma_u\cap\{D_x^2u=0\}.
$$
Moreover, for every compact set $K\Subset Q_1$, there exist $C>0$ and $\delta>0$ such that
$$
\|D_x^2u(z)\|\le C\,\left[\dist_p(z,\Gamma)\right]^{\gamma\beta},
$$
whenever $z\in(\Gamma_u\setminus\Gamma)\cap K$ and
$\dist_p(z,\Gamma)<\delta$.
\end{corollary}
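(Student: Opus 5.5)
For the set identity, I would use the pointwise information \eqref{hessian} from the proof of Corollary \ref{c4.6}. At every $z_0\in\Gamma_u$ the function $u$ is $C^{2+\gamma,1+\gamma/2}$ near $z_0$ and attains an interior minimum there. Hence $D_xu(z_0)=0$, $u_t(z_0)=0$ and $A_{z_0}=D_x^2u(z_0)\ge0$. Evaluating the first equation of \eqref{NewSystem} at $z_0$ gives $\operatorname{tr}A_{z_0}=v^\gamma(z_0)$.

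If $z_0\in\Gamma$, then $v(z_0)=0$ by Corollary \ref{coupling corollary}. So $A_{z_0}$ is positive semidefinite with zero trace, which forces $A_{z_0}=0$. Alternatively, Proposition \ref{two sided growth} with $\alpha>2$ together with the Taylor expansion gives the same conclusion. Conversely, if $z_0\in\Gamma_u$ and $D_x^2u(z_0)=0$, then $v^\gamma(z_0)=\operatorname{tr}A_{z_0}=0$. Thus $z_0\in\Gamma_u\cap\{v=0\}=\Gamma$ by Proposition \ref{coupling prop}. This proves $\Gamma=\Gamma_u\cap\{D_x^2u=0\}$.

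For the quantitative decay, fix $K\Subset Q_1$ and a slightly larger compact $K'$ with $K\Subset K'\Subset Q_1$. Let $z\in(\Gamma_u\setminus\Gamma)\cap K$ and set $d:=\dist_p(z,\Gamma)<\delta$. Since $\Gamma$ is relatively closed in $Q_1$, taking $\delta$ small I can pick $z_1\in\Gamma\cap K'$ with $d_p(z,z_1)\le d$, hence $z\in\overline{Q_{d}(z_1)}$. Choosing $\delta<r_{K'}$, Proposition \ref{two sided growth} applied at $z_1$ with radius slightly larger than $d$ gives $v(z)\le Cd^\beta$. Since $D_x^2u(z)\ge0$, its operator norm is bounded by its trace, so
$$\|D_x^2u(z)\|\le \operatorname{tr}D_x^2u(z)=v^\gamma(z)\le C d^{\gamma\beta}.$$
This is exactly the claimed estimate, with $C$ depending on $K'$ and hence on $K$.

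The main point needing care is that the trace identity $\operatorname{tr}D_x^2u=v^\gamma+u_t$ is used at $z$ with $u_t(z)=0$. This requires $z$ to be a genuine interior minimum in the two-sided sense, which holds because $u\ge0$ in $Q_1$ and $u(z)=0$, with $u$ being $C^{1}$ in time there. The other point is the localization: the nearest joint point $z_1$ must stay inside a fixed compact set where the uniform constant of Proposition \ref{two sided growth} applies. Choosing $\delta<\min\{r_{K'},\dist_p(K,\partial K')\}$ handles this.
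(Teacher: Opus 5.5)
Your proposal is correct and follows the paper's proof essentially verbatim: the trace identity $\operatorname{tr}D_x^2u=v^\gamma$ on $\Gamma_u$, together with $D_x^2u\ge0$, gives the characterization of $\Gamma$ (via Proposition~\ref{coupling prop}) and the bound $\|D_x^2u(z)\|\le v^\gamma(z)$. That bound is then controlled by two-sided growth at a nearby joint point. The only difference is your appeal to relative closedness of $\Gamma$ to get a nearest point, which the paper never establishes for the backward free boundary and which is unnecessary: choosing $\zeta\in\Gamma$ with $d_p(z,\zeta)<2d$ and bounding $v$ on $Q_{3d}(\zeta)$, as the paper does, gives the same estimate. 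Your localization through $K'$ is extra care that the paper omits.
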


\begin{proof}
As in the proof of Corollary \ref{c4.6}, for each $z_0 \in \Gamma_u$, we have
\begin{equation}\label{traceeq}
\operatorname{tr}D_x^2u(z_0)=v^\gamma(z_0).
\end{equation}
Since $D_x^2u(z_0)$ is positive semidefinite, then
$$
D_x^2u(z_0)=0 \quad\Longleftrightarrow\quad v(z_0)=0.
$$
Recalling $\Gamma=\Gamma_u\cap\{v=0\}$ and Proposition \ref{coupling prop}, we obtain
$$
\Gamma=\Gamma_u\cap\{D_x^2u=0\}.
$$
If $z_0\in\Gamma_u\setminus\Gamma$ and $\lambda_{\max}(A)$ is the maximal eigenvalue of the matrix $A$, then from \eqref{traceeq} we get
$$
\|D_x^2u(z_0)\|=\lambda_{\max}(D_x^2u(z_0))\le\operatorname{tr}D_x^2u(z_0)=v^\gamma(z_0).
$$
Let $d:=\dist_p(z_0,\Gamma)$ and assume that $d>0$ is sufficiently small. Choose $\zeta\in\Gamma$ such that
$$
d_p(z_0,\zeta)<2d, 
$$
and so $z_0\in Q_{3d}(\zeta)$. Hence,
$$
v(z_0)\le\sup_{Q_{3d}(\zeta)}v\le C d^\beta,
$$
and therefore,
$$
\|D_x^2u(z_0)\|_{\mathrm{op}}\le v^\gamma(z_0)\le Cd^{\gamma\beta}.
$$
\end{proof}

\begin{center}
\begin{figure}[h!]
\begin{tikzpicture}[
    scale=1.3,
    line cap=round,
    line join=round
]

\def\cebola{
    (0,1.6)
    .. controls (-0.10,0.85) and (-1.35,0.55)
       .. (-1.35,-0.45)
    .. controls (-1.35,-1.30) and (-0.65,-1.65)
       .. (0,-1.65)
    .. controls (0.65,-1.65) and (1.35,-1.30)
       .. (1.35,-0.45)
    .. controls (1.35,0.55) and (0.10,0.85)
       .. (0,1.6)
    -- cycle
}

\def\linha{
    (0,1.6)
    .. controls (0,2.05) and (0.15,2.40)
       .. (0.10,2.85)
}

\fill[gray!15] \cebola;
\fill[
    pattern=north east lines,
    pattern color=black
] \cebola;

\draw[gray!65, line width=4pt] \cebola;
\draw[gray!65, line width=4pt] \linha;

\draw[black, line width=0.8pt] \cebola;

\node[anchor=west] at (3.25,2.5)
    {\color{gray}$u\sim r^2$};
\node[anchor=west] at (0.75,2.45)
    {$\Gamma_u\setminus \Gamma_v  \subset \{v>0\}$};
\draw[black, thin, ->]
    (0.8,2.25) -- (0.08,2.0);
    
\node[anchor=west] at (1.85,0.0)
{$int\{u=0\}\subset int\{v=0\}$ };
\draw[black, thin, ->]
    (1.9,-0.15) -- (0.55,-0.8);

\node[anchor=west] at (0.8,1.45)
    {$\Gamma$};
\node[anchor=west] at (1.2,1.52)
    {\color{gray}$u\sim r^{2+\gamma\beta}$};
\draw[black, thin, ->]
    (0.9,1.25) -- (0.55,0.8);

\node at (3.6,-1.4) {$\{u>0\}\cap \{v>0\}$};
%

\draw[gray!35!black] 
(-2.5,3.2) -- (-2.5,-2) -- (5.9,-2) -- (5.9,3.2) -- (-2.5,3.2);
\end{tikzpicture}
\caption{Illustration of the growth dichotomy along the saturation interface $\Gamma_u$ (denoted by the thicker gray boundary), distinguishing quadratic growth on $\Gamma_u\setminus\Gamma$ from superquadratic growth on the joint free boundary $\Gamma$ (the black line).}
\end{figure}
\end{center}

\section{Nondegeneracy and consequences}\label{s5}

The second equation in \eqref{NewSystem} yields an analog of the weak comparison theorem in \cite{AT24}, suggesting that a corresponding nondegeneracy result and its consequences should also hold.

\subsection{An auxiliary barrier function}
We start by defining the barrier function
\begin{equation}\label{the barrier function}
    \psi(x,t):=m\left(\sqrt{|x|^2-t}-\frac{1}{4}\right)_+^\beta, \quad (x,t)\in Q_1^-,
\end{equation}
where $m>0$ is a small constant to be chosen later, and $\beta$ is defined by \eqref{definition of beta}. Since $\psi$ vanishes in the region $\sqrt{|x|^2-t}\le\frac{1}{4}$, there is a universal $\sigma>0$ such that
\begin{equation}\label{vanishing psi}
    \psi\equiv 0 \quad \text{in} \ Q_\sigma^-.
\end{equation}
Setting
$$
s:=\sqrt{|x|^2-t},\qquad h:=s-\frac{1}{4},
$$
we have $\psi=mh_+^\beta$. In the region where $h>0$, by direct calculations, we obtain
$$
\Delta\psi=m\beta h^{\beta-1}\Delta s+m\beta(\beta-1)h^{\beta-2}|Ds|^2
$$
and
$$
\psi_t=-\frac{m\beta}{2s}h^{\beta-1}.
$$
Therefore,
$$
\Delta\psi-\psi_t\le|\Delta\psi|+|\psi_t|\le Cmh^{\beta\theta},
$$
with $C:=\beta(n+\beta-\frac{1}{2})>0$. Since $\theta<1$, we can always choose $m>0$ small enough so that
$$
Cm\le m^\theta,
$$
and hence,
$$
\Delta\psi-\psi_t\le\psi^\theta,
$$
i.e., $\psi$ is a supersolution of the second equation in \eqref{NewSystem}. Observe, additionally, that on the parabolic boundary of $Q_1^-$, 
$$
\partial_pQ_1^-=\left(\partial B_1\times(-1,0]\right)\cup\left(B_1\times\{-1\}\right),
$$
one has
$$
\sqrt{|x|^2-t}\ge1.
$$
Hence,
\begin{equation}\label{lower estimate for barrier}
    \psi\ge m\left(\frac{3}{4}\right)^\beta =: c_* \quad \text{on} \ \partial_pQ_1^-.
\end{equation}

\subsection{Nondegeneracy estimates}
We already know that the intrinsic norm of any nonnegative pair of solutions of \eqref{NewSystem} grows at most linearly. We now show that its $\sup$ grows at least linearly. 

\begin{theorem}\label{non-degeneracy}
    If $(u,v)\ge0$ is a bounded solution of \eqref{NewSystem}, and $z_0=(x_0,t_0)\in\Gamma$, then there exists a constant $c>0$, depending only on $\gamma$, $\theta$ and $n$, such that
    $$
    \sup_{\overline{Q_r^-(x_0,t_0)}}|(u,v)|\ge cr,
    $$
    whenever $r>0$ is such that $\overline{Q_r^-(z_0)}\subset Q_1$.
\end{theorem}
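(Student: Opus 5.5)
We prove the estimate for the nutrient component alone; this suffices because $|(u,v)|\ge v^{1/\beta}$. We use the barrier $\psi$ from \eqref{the barrier function} and a comparison argument in the style of \cite{AT24}.

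\emph{Reduction to unit scale.} Translate so that $z_0=(0,0)$ and rescale by
$$
v_r(x,t):=r^{-\beta}v(rx,r^2t).
$$
By \eqref{definition of beta}, $v_r$ again solves $\Delta v_r-\partial_t v_r=v_r^\theta$ in $Q_1^-$, and $\overline{Q_1^-}$ lies in its domain. Since $|(u,v)|\geq v^{1/\beta}$, it is enough to prove
$$
\sup_{\overline{Q_1^-}}v_r\ge c_*,
$$
with $c_*$ the constant from \eqref{lower estimate for barrier}. Scaling back then gives $\sup_{\overline{Q_r^-(z_0)}}|(u,v)|\ge c_*^{1/\beta}\,r$.

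\emph{Contradiction argument.} Suppose instead that $v_r<c_*$ on $\overline{Q_1^-}$. Then $v_r\le\psi$ on $\partial_pQ_1^-$ by \eqref{lower estimate for barrier}. Set $w:=v_r$. The map $s\mapsto s^\theta$ is increasing, and $\psi$ satisfies $\Delta\psi-\psi_t\le\psi^\theta$ while $\Delta w-w_t=w^\theta$. A weak comparison principle for this monotone absorption operator should give $w\le\psi$ in $Q_1^-$. By \eqref{vanishing psi}, $\psi\equiv0$ in $Q_\sigma^-$, so $v_r\equiv0$ in $Q_\sigma^-$.

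This contradicts $z_0\in\Gamma$. By Corollary \ref{coupling corollary}, $\Gamma\subset\Gamma_v$, so every backward cylinder $Q_\rho^-(0,0)$ meets $\{v_r>0\}$. Hence $\sup_{\overline{Q_1^-}}v_r\ge c_*$ and the theorem follows. The constant $c=c_*^{1/\beta}$ depends only on $n,\gamma,\theta$, through $m$ and $\beta$.

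\emph{Main obstacle: comparison with a non-Lipschitz absorption.} We must justify that $w\le\psi$ in $Q_1^-$ when $w\le\psi$ on $\partial_pQ_1^-$, $w$ is a subsolution-type solution and $\psi$ a supersolution of $\Delta z-z_t=z^\theta$.
\begin{itemize}
\item Test the difference of the two inequalities with $(w-\psi)_+$ times a cutoff in time. On the set $\{w>\psi\}$,
$$
(w^\theta-\psi^\theta)(w-\psi)_+\ge0,
$$
by monotonicity. This yields $\frac{d}{dt}\int(w-\psi)_+^2\le0$, and hence $(w-\psi)_+\equiv0$, since it vanishes initially and on the lateral boundary.
\item Non-Lipschitzness of $s^\theta$ at $0$ is harmless here, because only monotonicity is used.
\item The barrier must be a genuine weak supersolution across the interface $\{h=0\}$. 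Since $\beta>2$, $\psi$ is $C^{1}$ with vanishing gradient there, so no singular measure appears.
\item The bound on $\Delta\psi-\psi_t$ must hold uniformly where $h>0$; this needs $s\ge1/4$ there, which holds.
\end{itemize}
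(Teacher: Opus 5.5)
Your proposal is correct and follows the paper's proof essentially step for step. You rescale to unit size, compare $v$ with the barrier $\psi$ to get $v\equiv0$ in $Q_\sigma^-$, contradict backward accessibility of $\{v>0\}$ at $z_0$, and conclude via $|(u,v)|\ge v^{1/\beta}$. The only differences are cosmetic: you cite $\Gamma\subset\Gamma_v$ from Corollary \ref{coupling corollary} where the paper cites Proposition \ref{coupling lemma}, and your energy argument using monotonicity of $s\mapsto s^\theta$ spells out the comparison principle the paper merely invokes.
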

\begin{proof} 
    We first prove that
    \begin{equation}\label{v nondegeneracy}
        \sup_{\overline{Q_r^-(x_0,t_0)}}v\ge c_*r^\beta,
    \end{equation}
    where, as before, $\beta$ is defined by \eqref{definition of beta}, and $c_*$ is defined in \eqref{lower estimate for barrier}. After scaling and translation, it is enough to prove \eqref{v nondegeneracy} for $(x_0,t_0)=(0,0)$ and $r=1$, i.e., we aim to prove
    \begin{equation}\label{scaled v nondegeneracy}
        \sup_{\overline{Q_1^-}}v\ge c_*.
    \end{equation}
    If \eqref{scaled v nondegeneracy} fails, i.e.,
    $$
    \sup_{\overline{Q_1^-}}v<c_*,
    $$
    then
    $$
    v\le\psi \quad \text{on} \ \partial_p Q_1^-,
    $$
    where $\psi\ge0$ is the barrier function defined by \eqref{the barrier function}. Since $v$ is a solution and $\psi$ is a supersolution of the second equation in \eqref{NewSystem}, the parabolic comparison principle yields
    $$
    v\le\psi \quad \text{in} \ Q_1^-.
    $$
    Then, thanks to \eqref{vanishing psi},
    $$
    v\equiv0 \quad \text{in} \ Q_\sigma^-.
    $$
    However, this contradicts Proposition \ref{coupling lemma}, which yields that $(0,0)\in\operatorname{cl}^-\{v>0\}$. Thus \eqref{scaled v nondegeneracy}, and hence \eqref{v nondegeneracy}, holds. It remains to observe that
    $$
    |(u,v)|\ge v^{\frac{1}{\beta}},
    $$
    and the result follows with $c:=c_*^{\frac{1}{\beta}}$.
\end{proof}

\subsection{Geometry of the joint free boundary}
With the growth and nondegeneracy estimates, we deduce the quantitative thickness of the tumor boundary as in \cite[Theorem 6.2]{RTU19}. We recall the definition of space-time porosity.
\begin{definition}
The set $E\subset\R^n\times\R$ is called space-time porous if there exist constants $\delta\in(0,1)$ and $r_0>0$ such that, for every $(x_0,t_0)\in E$ and every $0<r<r_0$, there exists $(y,s)\in Q_r^-(x_0,t_0)$ for which
$$
Q_{\delta r}^-(y,s)\subset Q_r^-(x_0,t_0)\setminus E.
$$
\end{definition}
With respect to the parabolic metric $d_p$, Lebesgue measure is $(n+2)$-regular. A backward cylinder contains a parabolic metric ball of comparable radius. The standard dimension estimate for porous sets then yields
$$
\dim_pE\le n+2-c\delta^{n+2},
$$
for a constant $c>0$ depending only on the dimension (see \cite[Theorem 4.7]{JJKRRS10}). Here, $\dim_pE$ is the parabolic Hausdorff dimension of the set $E$. In particular, a space-time porous set has zero Lebesgue measure.

\begin{corollary}
If $(u,v)\ge0$ is a bounded solution of \eqref{NewSystem}, then for each compact set $K\Subset Q_1$, the joint free boundary $\Gamma\cap K$ is porous with respect to the parabolic metric, with a porosity constant depending only on $n$, $\gamma$, $\theta$, $\|u\|_\infty$, $\|v\|_\infty$, and $\dist_p(K,\partial Q_1)$.
\end{corollary}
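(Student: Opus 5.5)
The plan is to combine the nondegeneracy of Theorem~\ref{non-degeneracy} with the upper growth bound of Theorem~\ref{optimal growth theorem} (in its localized form, Remark~\ref{backward growth localization}), as in \cite[Theorem 6.2]{RTU19}. Fix $K\Subset Q_1$, a point $z_0=(x_0,t_0)\in\Gamma\cap K$, and $r<r_0$, where $r_0$ is small depending on $\dist_p(K,\partial Q_1)$. Nondegeneracy applied to $\overline{Q_{r/2}^-(z_0)}$ gives a point $y^*=(y,s)\in\overline{Q_{r/2}^-(z_0)}$ with
$$
v(y^*)^{1/\beta}\ge c\,r/2 .
$$
The proof of Theorem~\ref{non-degeneracy} in fact produces this lower bound for $v$ alone. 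So $y^*$ lies well inside $Q_r^-(z_0)$ and $v$ is large there in intrinsic units.

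Next we show that $y^*$ is far from $\Gamma$ in the parabolic metric. Let $\zeta\in\Gamma$ be any point with $d_p(\zeta,y^*)=d$ small, and let $K'$ be a slightly larger compact set containing $\zeta$. Proposition~\ref{two sided growth}, the two-sided growth estimate, gives
$$
v(y^*)\le\sup_{Q_{2d}(\zeta)}v\le C(2d)^\beta .
$$
Using two-sided cylinders matters here, because $y^*$ need not lie in the past of $\zeta$. Comparing with the lower bound gives $d\ge c_1 r$ with $c_1=c/(4C^{1/\beta})$. Hence $\dist_p(y^*,\Gamma)\ge c_1 r$, and therefore the parabolic ball of radius $c_1r$ around $y^*$ misses $\Gamma$.

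Finally we fit a backward cylinder inside. Set $\delta:=\min\{c_1,1/4\}/2$. Then $Q_{\delta r}^-(y^*)$ is contained in the parabolic ball of radius $c_1 r$ about $y^*$, so it avoids $\Gamma$. It also lies in $Q_r^-(z_0)$: spatially we have $|x-x_0|<r/2+\delta r<r$, and temporally $t\in(s-\delta^2r^2,s]\subset(t_0-r^2/4-\delta^2r^2,t_0]\subset(t_0-r^2,t_0]$. This yields porosity with $\delta$ depending only on $n,\gamma,\theta,\|u\|_\infty,\|v\|_\infty$ and $\dist_p(K,\partial Q_1)$.

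The main technical point is the uniformity of constants. The growth constant $C$ and the localization radius must be taken on a compact neighborhood $K'$ of $K$ that contains all relevant $\zeta$. These $\zeta$ lie within $d_p$-distance $r$ of $K$, so $r_0<\tfrac12\dist_p(K,\partial Q_1)$ suffices. Nondegeneracy requires $\overline{Q_{r/2}^-(z_0)}\subset Q_1$, which this choice of $r_0$ also ensures.
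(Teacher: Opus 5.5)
Your proof is correct, but its key step differs from the paper's. Both arguments start the same way: nondegeneracy, really \eqref{v nondegeneracy}, gives $(y,s)\in\overline{Q_{r/2}^-(z_0)}$ with $v(y,s)\gtrsim r^\beta$.

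The paper then rescales $\tilde v(x,t)=r^{-\beta}v(y+rx,s+r^2t)$. It uses the \emph{backward} bound $v\le Cr^\beta$ on $Q_r^-(z_0)$ from Theorem~\ref{optimal growth theorem} to get $0\le\tilde v\le C$ on $Q_{1/4}^-$. Interior H\"older continuity then gives $v\ge\frac c2 r^\beta$ on $Q_{\delta r}^-(y,s)$, and this cylinder misses $\Gamma$ because $\Gamma\subset\{v=0\}$.

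You instead bound $\dist_p((y,s),\Gamma)$ from below directly. You play the lower bound at $(y,s)$ against the upper growth at a hypothetical nearby point $\zeta\in\Gamma$. This is the classical growth-versus-nondegeneracy argument and needs no H\"older estimate at this stage. The price, as you correctly note, is that you need the forward half of Proposition~\ref{two sided growth}, since $\zeta$ may lie in the past of $(y,s)$; the paper's route gets by with the backward estimate alone.

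Conversely, the paper's route gives the stronger conclusion $Q_{\delta r}^-(y,s)\subset\{v>0\}$. This is exactly what is later quoted, via \eqref{porosity inclusion}, in Corollary~\ref{positive density} and Remark~\ref{r5.1}. Your conclusion, a full two-sided $d_p$-ball free of $\Gamma$, suffices for porosity but would need an extra argument to yield the density of $\{v>0\}$.

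One bookkeeping point: besides $r_0<\frac12\dist_p(K,\partial Q_1)$, you also need $2c_1r_0<r_{K'}$. This ensures the growth estimate at $\zeta$ applies with radius $2d<2c_1r$. It is not circular, because $K'$, and hence $C_{K'}$, $r_{K'}$ and $c_1$, is fixed before $r_0$ is shrunk.
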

\begin{proof}
    Take $(x_0,t_0)\in\Gamma\cap K$, and let $r>0$ be so small that $\overline{Q_r^-(x_0,t_0)}\subset Q_1$. By Theorem~\ref{non-degeneracy}, there exists $(y,s)\in\overline{Q_{r/2}^-(x_0,t_0)}$ such that
    $$
    v(y,s)\ge cr^\beta.
    $$
    Note that $Q_{r/4}^-(y,s)\subset Q_r^-(x_0,t_0)$. On the other hand, by Theorem~\ref{optimal growth theorem},
    $$
    v\le Cr^\beta \quad \text{in} \ Q_r^-(x_0,t_0).
    $$
    Observe now that
    $$
    \tilde{v}(x,t):=\frac{v(y+rx, s+r^2t)}{r^\beta}
    $$
    satisfies the second equation of \eqref{NewSystem} and
    $$
    0\le\tilde{v}\le C \quad \text{in} \ Q_{1/4}^-.
    $$
    Furthermore,
    $$
    \tilde{v}(0,0)\ge c,
    $$
    and by parabolic interior H\"older regularity for the heat equation (see, for example, \cite{RTU19}), there exists $\sigma\in(0,1)$ such that
    $$
    |\tilde{v}(x,t)-\tilde{v}(0,0)|\le\tilde{C}\left(|x|+|t|^{1/2}\right)^\sigma, \quad (x,t) \in Q_{1/8}^-,
    $$
    for some constant $\tilde{C}>0$. Choosing $\delta\in(0,1/8)$ so small that
    $$
    \tilde{C}(2\delta)^\sigma\le\frac{c}{2},
    $$
    for every $(x,t)\in Q_\delta^-$, one has
    $$
    \tilde{v}(x,t)\ge\tilde{v}(0,0)-|\tilde{v}(x,t)-\tilde{v}(0,0)|\ge c-\frac{c}{2}>0.
    $$
    Scaling back, we obtain
    $$
    v \ge \left( \frac{c}{2} \right)r^\beta>0 \quad \text{in} \ Q_{\delta r}^-(y,s)\subset Q_r^-(x_0,t_0),
    $$
    and so
\begin{equation}\label{porosity inclusion}
        Q_{\delta r}^-(y,s)\subset Q_r^-(x_0,t_0)\setminus\Gamma,
    \end{equation}    
which concludes the proof.
\end{proof}

As a consequence, we obtain the following positive nutrient density result, establishing positive uniform space-time density at every point on the joint boundary. Below, we write \(|Q_r^-|\) and \(|B_r|\) for the \((n+1)\)- and \(n\)-dimensional Lebesgue measures of the cylinder \(Q_r^-\) and the ball \(B_r\), respectively.

\begin{corollary}\label{positive density}
    If $(u,v)\ge0$ is a bounded solution of \eqref{NewSystem} in $Q_1$, and $K\Subset Q_1$, then there exist $c>0$ and $r_\star>0$ such that, for every $(x_0,t_0)\in\Gamma\cap K$ and $r\in(0,r_\star)$, one has
    $$
    \frac{|Q_r^-(x_0,t_0)\cap\{|(u,v)|>0\}|}{|Q_r^-(x_0,t_0)|}\ge c.
    $$
    In particular, in the present two-sided setting,
    $$
    \frac{|Q_r(x_0,t_0)\cap\{|(u,v)|>0\}|}
         {|Q_r(x_0,t_0)|}\ge\frac c2,
    $$
    whenever $Q_r(x_0,t_0)\subset Q_1$.
\end{corollary}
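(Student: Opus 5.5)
The plan is to reuse the porosity construction just carried out. Fix $(x_0,t_0)\in\Gamma\cap K$ and choose $r_\star>0$ so that, for $r<r_\star$, we have $\overline{Q_r^-(x_0,t_0)}\subset Q_1$ and the growth bound $v\le Cr^\beta$ in $Q_r^-(x_0,t_0)$ holds. For the growth bound we use Theorem~\ref{optimal growth theorem} in its localized form from Remark~\ref{backward growth localization}. The constants then depend only on $K$ and the data.

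\textbf{Step 1.} Apply Theorem~\ref{non-degeneracy} at scale $r/2$ to get a point $(y,s)\in\overline{Q_{r/2}^-(x_0,t_0)}$ with $v(y,s)\ge c(r/2)^\beta$.

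\textbf{Step 2.} Rescale by $\tilde v(x,t)=r^{-\beta}v(y+rx,s+r^2t)$. By the previous corollary, $\tilde v$ is bounded by $C$ in $Q_{1/4}^-$ and satisfies the uniform interior Hölder estimate. Hence there is $\delta>0$, independent of $z_0$ and $r$, with $v>0$ on $Q_{\delta r}^-(y,s)$. Since $(y,s)$ lies in the half-size cylinder, $Q_{\delta r}^-(y,s)\subset Q_r^-(x_0,t_0)$ once $\delta\le 1/4$. Because $|(u,v)|\ge v^{1/\beta}>0$ there, we get
$$
|Q_r^-(x_0,t_0)\cap\{|(u,v)|>0\}|\ge |Q_{\delta r}^-|=\delta^{n+2}|Q_r^-|,
$$
which gives the first estimate with $c=\delta^{n+2}$.

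\textbf{Step 3.} For the two-sided statement, note that $Q_r^-(x_0,t_0)\subset Q_r(x_0,t_0)$ and $|Q_r|=2|Q_r^-|$. Therefore
$$
\frac{|Q_r\cap\{|(u,v)|>0\}|}{|Q_r|}\ge \frac{|Q_r^-\cap\{|(u,v)|>0\}|}{2|Q_r^-|}\ge \frac c2.
$$

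\textbf{Main obstacle.} The only delicate point is uniformity. The nondegeneracy constant and the Hölder modulus are universal, and the growth constant from Remark~\ref{backward growth localization} is uniform over $K$. So $\delta$, and hence $c$, depends only on $n,\gamma,\theta,\|u\|_\infty,\|v\|_\infty$ and $K$. One also has to check that the interior Hölder estimate applies to the rescaled semilinear equation. It does, because the right-hand side $\tilde v^\theta$ is bounded by $C^\theta$.
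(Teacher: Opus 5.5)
Your proposal is correct and follows the paper's proof. The paper likewise combines the nutrient nondegeneracy estimate \eqref{v nondegeneracy} with the porosity construction, which shows that $v>0$ on $Q_{\delta r}^-(y,s)\subset Q_r^-(x_0,t_0)$, obtaining $c=\delta^{n+2}$, and then gets the two-sided bound from $Q_r^-\subset Q_r$ and $|Q_r|=2|Q_r^-|$. Your care with the scale $r/2$ in the nondegeneracy step and with the localized growth constant of Remark~\ref{backward growth localization} just makes the uniformity over $K$ more explicit than the paper does.
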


\begin{proof}
Indeed, by \eqref{v nondegeneracy} and \eqref{porosity inclusion}, since $\{v>0\}\subset\{|(u,v)|>0\}$, we have 
$$
Q_{\delta r}^-(y,s)\subset Q_r^-(x_0,t_0)\cap\{|(u,v)|>0\}.
$$
From this,
$$
|Q_r^-(x_0,t_0)\cap\{|(u,v)|>0\}|\ge|Q_{\delta r}^-(y,s)|,
$$
which implies
$$
|Q_r^-(x_0,t_0)\cap\{|(u,v)|>0\}| \ge |B_1|(\delta r)^{n+2} =\delta^{n+2}|Q_r^-(x_0,t_0)|.
$$
The backward estimate follows with $c:=\delta^{n+2}$. The two-sided estimate is immediate from $Q_r^-(z_0)\subset Q_r(z_0)$ and $|Q_r(z_0)|=2|Q_r^-(z_0)|$.
\end{proof}

\begin{remark}\label{r5.1}
The proof of Corollary \ref{positive density} yields a stronger density estimate for the nutrient positivity set. Precisely, there exists a universal constant
    $\delta\in(0,1)$ such that
    $$
        \frac{|Q_r^-(x_0,t_0)\cap\{v>0\}|}
        {|Q_r^-(x_0,t_0)|}
        \geq \delta^{n+2},
    $$
    for each $(x_0,t_0)\in\Gamma$ and small $r>0$.
\end{remark}

\section{Joint blow-up classification and rigidity}\label{s6}

In this section, we study the simultaneous blow-up behavior of solutions at the joint free boundary $\Gamma$. Building on the classification and uniqueness of nutrient blow-ups at regular free boundary points established in \cite{AFSW22}, we derive that the coupling structure of the system uniquely determines the blow-up of the density defect. Consequently, both components admit a unique joint blow-up, with a common positivity half-space. The converse also holds: a half-space blow-up of the density defect determines the nutrient profile through the two equations. This identifies a common regular set along the joint free boundary $\Gamma$.

Throughout the section, we denote
\begin{equation}\label{definition of H and c}
c_\beta :=\left[\beta(\beta-1)\right]^{\frac{1}{\theta-1}} \quad \mbox{and} \quad c_{\alpha,\beta}=\frac{c_{\beta}^\gamma}{\alpha(\alpha-1)}.
\end{equation}
We adopt a two-sided, subsequential notion of regularity consistent with the regular point framework of \cite[Definition 4.4]{AFSW22}. In the following definitions, the $u_{r,z_0}$ and $v_{r,z_0}$ are defined by
\eqref{blow-ups}. Precompactness at joint free boundary points follows from Proposition \ref{two sided growth}.

\begin{definition}\label{regular points}
A point $z_0\in\Gamma_v$ is called regular if there exist $\nu\in\mathbb S^{n-1}$ and a sequence $r_j\to0^+$ such that
$$
v_{r_j,z_0}(x,t)
\longrightarrow
c_\beta(x\cdot\nu)_+^\beta,
$$
locally uniformly in $\R^n\times\R$. The respective set of regular points is denoted by $\mathcal R_v$.
\end{definition}
\begin{definition}\label{regular points u}
A point $z_0\in\Gamma_u$ is called regular if there exist $A>0$, $\nu\in\mathbb S^{n-1}$, and a sequence $r_j\to0^+$ such that
$$
u_{r_j,z_0}(x,t)\longrightarrow A(x\cdot\nu)_+^\alpha,
$$
locally uniformly in $\R^n\times\R$.
The respective set of regular points is denoted by $\mathcal R_u$.
\end{definition}

Neither definition assumes uniqueness of blow-ups. However, by \cite[Proposition 4.5 and Theorem 6.4]{AFSW22}, the nutrient half-space blow-up at points $z_0 \in\mathcal R_v$ is unique. Furthermore, \cite[Proposition 4.10]{AFSW22} states that $\mathcal R_v$ is relatively open. Below, we show that at points of $\Gamma\cap\mathcal R_v$, the regular nutrient blow-up determines the forcing term in the limiting equation for the density defect. In particular, every blow-up of $u$ is a nonnegative entire solution of a linear parabolic equation with an explicit half-space source and the prescribed intrinsic global growth.

\begin{lemma}\label{blow-up limit}
If $(u,v)\ge0$ is a bounded solution of \eqref{NewSystem}, and $z_0\in\Gamma\cap\mathcal R_v$, then for every sequence $r_j\downarrow0$, a subsequence of
$u_{r_j,z_0}$ converges locally uniformly on
$\R^n\times\R$ to some $U\ge0$ satisfying
\begin{equation}\label{the RHS}
\Delta U-\partial_tU=c_\beta^\gamma(x\cdot\nu)_+^{\alpha-2}
\quad\text{in }\R^n\times\R,
\end{equation}
where $\nu=\nu(z_0)$ is the nutrient blow-up direction.
Moreover, for every $R>0$,
\begin{equation}\label{globalgrowth}
\sup_{Q_R}U\le CR^\alpha.
\end{equation}
\end{lemma}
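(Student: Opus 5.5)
The argument combines the two-sided growth estimate with compactness, and uses uniqueness of the nutrient blow-up to identify the limiting source term.

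\textbf{Uniform bounds.} Fix $z_0\in\Gamma\cap\mathcal R_v$ and a sequence $r_j\downarrow0$. By Proposition~\ref{two sided growth}, applied with a compact $K\ni z_0$, for $r<r_K$ we have $\sup_{Q_r(z_0)}u\le Cr^\alpha$ and $\sup_{Q_r(z_0)}v\le Cr^\beta$. Rescaling, for any fixed $R>0$ and $j$ large enough that $Rr_j<r_K$,
$$
\sup_{Q_R}u_{r_j,z_0}\le CR^\alpha,\qquad \sup_{Q_R}v_{r_j,z_0}\le CR^\beta .
$$
By the intrinsic scaling of Section~\ref{s2}, the pair $(u_{r_j,z_0},v_{r_j,z_0})$ solves \eqref{NewSystem} in $Q_{1/r_j}(0)$, which is the point of the choice of $\alpha$ and $\beta$. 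The right-hand sides $v_{r_j,z_0}^\gamma$ are therefore bounded in $Q_R$ uniformly in $j$.

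\textbf{Compactness and the limit equation.} Interior parabolic $W^{2,1}_p$ estimates, with Sobolev embedding, give equicontinuity of $u_{r_j,z_0}$ and $v_{r_j,z_0}$ on $Q_{R/2}$. A diagonal argument over $R\in\mathbb N$ extracts a subsequence along which $u_{r_j,z_0}\to U\ge0$ and $v_{r_j,z_0}\to V\ge0$ locally uniformly on $\R^n\times\R$.

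The key point is to identify $V$. Since $z_0\in\mathcal R_v$, the nutrient half-space blow-up is unique by \cite[Proposition 4.5 and Theorem 6.4]{AFSW22}. Hence every sequence $r_j\to0$, not only the one in Definition~\ref{regular points}, satisfies $v_{r_j,z_0}\to c_\beta(x\cdot\nu)_+^\beta$, with $\nu=\nu(z_0)$ fixed. So $V=c_\beta(x\cdot\nu)_+^\beta$. Uniform convergence passes to $v_{r_j,z_0}^\gamma\to c_\beta^\gamma(x\cdot\nu)_+^{\gamma\beta}$, and $\gamma\beta=\alpha-2$. Passing to the limit in the weak formulation (test against $\varphi\in C_c^\infty$, integrate by parts to place all derivatives on $\varphi$, and use local uniform convergence) yields \eqref{the RHS} in the distributional sense. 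Since the right-hand side is H\"older continuous, Schauder theory makes $U$ a classical solution.

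\textbf{Global growth and the main obstacle.} Estimate \eqref{globalgrowth} follows by letting $j\to\infty$ in the uniform bound $\sup_{Q_R}u_{r_j,z_0}\le CR^\alpha$, which holds for every fixed $R$ once $j$ is large.

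The main obstacle is the step that needs a full limit rather than a subsequential one: the uniqueness of the nutrient blow-up at regular points. It requires checking that the cited results of \cite{AFSW22} apply in our two-sided, backward free boundary setting. If only subsequential uniqueness were available, the first fallback would be to pass to a further subsequence along which $v$ converges to some half-space profile, and then argue that the direction is independent of the subsequence; that independence is exactly what the cited theorem guarantees.
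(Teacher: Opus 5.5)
Your proposal is correct and follows the paper's proof essentially step for step: rescaled two-sided growth from Proposition~\ref{two sided growth}, interior parabolic compactness with a diagonal argument, identification of the nutrient limit through the cited uniqueness of blow-ups at points of $\mathcal R_v$ (using $\gamma\beta=\alpha-2$), and passage to the limit in the growth bound. The only slip is cosmetic: the rescaled pair solves the system on the rescaled translate of $Q_1$, which contains $Q_{d/r_j}$ with $d=\dist_p(z_0,\partial Q_1)$, not on $Q_{1/r_j}(0)$; this is harmless because you only use a fixed $Q_R$ for large $j$.
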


\begin{proof}
By Proposition~\ref{two sided growth}, for every fixed $R>0$,
$$
\sup_{Q_R}u_{r,z_0}\le CR^\alpha, \qquad\sup_{Q_R}v_{r,z_0}\le CR^\beta,
$$
for all sufficiently small $r>0$. The equations and local parabolic regularity give precompactness on each fixed two-sided cylinder. By a diagonal argument, $u_{r_j,z_0}\to U\ge0$, locally uniformly on $\R^n\times\R$ along a subsequence. Since $z_0\in\Gamma\cap\mathcal R_v$, the uniqueness of the nutrient half-space blow-up at regular points yields
$$
v_{r_j,z_0}\longrightarrow
c_\beta(x\cdot\nu)_+^\beta,
\quad\text{locally uniformly on }\R^n\times\R.
$$
Passing to the limit in
$$
\Delta u_{r_j,z_0}-\partial_tu_{r_j,z_0}=v_{r_j,z_0}^\gamma,
$$
gives \eqref{the RHS}, since $\alpha-2=\beta\gamma$. The bound \eqref{globalgrowth} follows by passing to the limit
in the rescaled two-sided optimal growth estimate.
\end{proof}

\begin{remark}\label{rafa function}
The explicit nutrient profile determines a natural particular solution of the first equation. In fact, since $\alpha>2$, the function
$P(x):=c_{\alpha,\beta}(x\cdot\nu)_+^\alpha$ is $C^2$ across $\{x\cdot\nu=0\}$. Thus,
\begin{equation}\label{P equation}
    \Delta P-\partial_t P=c_{\alpha,\beta}\alpha(\alpha-1)(x\cdot\nu)_+^{\alpha-2}
\end{equation}
in the whole space. Consequently, for $U$ as in Lemma \ref{blow-up limit}, the function
$$
    H:=U-P
$$
is a caloric function in $\R^n\times\R$ and satisfies
$$
    \sup_{Q_R}|H|\le CR^\alpha, \quad \mbox{for each } \; R>0.
$$
\end{remark}
\begin{lemma}\label{caloric liouville}
Let $H$ be an ancient caloric function on $\R^n\times(-\infty,0]$. Assume that, for some $C>0$ and $d\ge0$,
$$
\sup_{Q_R^-}|H|\le C(1+R^d),
$$
for each $R>0$. Then $H$ is a polynomial of parabolic degree at most $\lfloor d\rfloor$.
\end{lemma}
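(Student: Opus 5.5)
The plan is to prove the Liouville statement by combining interior derivative estimates for caloric functions with a scaling argument, and then to integrate. Fix a multi-index $\mu$ in $x$ and an integer $k\ge0$, and set the parabolic order $m:=|\mu|+2k$. The standard interior estimate for solutions of the heat equation on a backward cylinder reads
$$
\sup_{Q_{R/2}^-}|\partial_x^\mu\partial_t^kH|\le \frac{C_{n,m}}{R^{m}}\sup_{Q_R^-}|H|.
$$
It follows by rescaling from the case $R=1$, which in turn comes from the local smoothness of caloric functions, for instance via the heat kernel representation with a cutoff, or from iterated Caccioppoli and energy estimates. Since $H$ is defined on all of $\R^n\times(-\infty,0]$, the cylinder $Q_R^-$ is admissible for every $R>0$.

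Now fix any $(x,t)$ with $t\le0$. For $R$ large enough, $(x,t)\in Q_{R/2}^-$, and the hypothesis gives
$$
|\partial_x^\mu\partial_t^kH(x,t)|\le C_{n,m}C\,R^{-m}(1+R^d).
$$
If $m>d$, the right-hand side tends to $0$ as $R\to\infty$. Hence every derivative of parabolic order $m>d$, that is, $m\ge\lfloor d\rfloor+1$, vanishes identically on $\R^n\times(-\infty,0]$.

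It remains to conclude that $H$ is a polynomial of parabolic degree at most $\lfloor d\rfloor$. First, every pure spatial derivative of order $\lfloor d\rfloor+1$ vanishes. So for each fixed $t$, $H(\cdot,t)$ is a polynomial in $x$ of degree at most $\lfloor d\rfloor$, with coefficients $a_\mu(t)=\partial_x^\mu H(0,t)/\mu!$. Each coefficient $a_\mu$ satisfies $\partial_t^k a_\mu\equiv0$ whenever $|\mu|+2k>d$, so $a_\mu$ is a polynomial in $t$ of degree at most $(\lfloor d\rfloor-|\mu|)/2$. Therefore each monomial $x^\mu t^k$ appearing in $H$ has $|\mu|+2k\le\lfloor d\rfloor$, which is the claim. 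One should also note that $H$ is $C^\infty$, since caloric functions are smooth by interior regularity; this justifies the differentiation and the Taylor identification.

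The only step that needs care is the scaled derivative estimate on backward cylinders, and in particular that it uses only the past of the point. This is exactly what allows the argument to work for ancient solutions defined only up to time $0$. I would quote it from standard parabolic theory (e.g.\ \cite{LSU68}) rather than reprove it, checking only the scaling: $H_R(x,t):=H(Rx,R^2t)$ is caloric, and $\partial_x^\mu\partial_t^kH_R=R^{m}(\partial_x^\mu\partial_t^kH)(Rx,R^2t)$.
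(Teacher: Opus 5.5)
Your proof is correct and follows essentially the same route as the paper: scaled interior derivative estimates on backward cylinders, combined with the growth hypothesis and $R\to\infty$, show that every derivative of parabolic order greater than $d$ vanishes. Your last step recovers the polynomial structure explicitly, through the spatial Taylor coefficients $a_\mu(t)$ and their vanishing time derivatives; the paper only asserts this, and its opening decomposition into homogeneous components presupposes polynomiality, so your version makes that decomposition unnecessary.
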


\begin{proof}
We recall that the \textit{parabolic degree} of a monomial $x^\mu t^k$ is $|\mu|+2k$, and decompose $H$ into its parabolically homogeneous components,
$$
H=\sum_{j=0}^{\lfloor\alpha\rfloor}H_j,
$$
where $H_j(rx,r^2t)=r^jH_j(x,t)$. Assuming $H\not\equiv0$, set
$$
m:=\min\{j:H_j\not\equiv0\}.
$$
Then $r^{-m}H(rx,r^2t)\longrightarrow H_m(x,t)$, locally uniformly as $r\to0^+$. 

We then fix $z_0:=(x_0,t_0)\in\R^n\times(-\infty,0]$, and consider $\mu$ a multi-index and $k\ge0$. Since $H$ is caloric, we have the interior estimate
$$
    |D_x^\mu\partial_t^kH(z_0)|\le CR^{-|\mu|-2k}\sup_{Q_R^-(z_0)}|H|,
$$
for a constant $C>0$, depending only on $\mu$ and $k$. The growth assumption yields
$$
    |D_x^\mu\partial_t^kH(z_0)|\le CR^{d-(|\mu|+2k)},
$$
for each $R>1$, where $C>0$ depends only on $\mu$, $k$ and $z_0$. If $|\mu|+2k>d$, letting $R\to\infty$ gives $D_x^\mu\partial_t^kH(z_0)=0$. Since $z_0$ was arbitrary, all derivatives of parabolic order strictly larger than $d$ vanish. Hence, $H$ is a polynomial of parabolic degree at most $\lfloor d\rfloor$.
\end{proof}

\subsection{A parabolic Liouville theorem}\label{s7}

Next, we establish a Liouville theorem that will later be used to characterize the blow-up profiles at regular points of the nutrient free boundary.

\begin{theorem}\label{rigidity}
Let $U$ be a nonnegative solution of
\begin{equation}\label{ancient solution}
    \Delta U -\partial_t U=c_\beta^\gamma(x\cdot\nu)_+^{\alpha-2}\quad\text{ in }\ \R^n\times(-\infty,0],
\end{equation}
for some $\nu\in\mathbb{S}^{n-1}$, such that 
\begin{equation}\label{ancient solution growth}
    \sup_{Q_R^-}U\le CR^\alpha,
\end{equation}
for each $R>0$. Then
$$
U(x,t)=c_{\alpha,\beta}(x\cdot\nu)_+^\alpha.
$$
\end{theorem}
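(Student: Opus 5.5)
The plan is to reduce the statement to the caloric Liouville lemma. Set $P(x):=c_{\alpha,\beta}(x\cdot\nu)_+^\alpha$ and $H:=U-P$. By \eqref{P equation} and the identity $c_{\alpha,\beta}\alpha(\alpha-1)=c_\beta^\gamma$, the function $H$ is caloric in $\R^n\times(-\infty,0]$. Since $0\le P\le c_{\alpha,\beta}R^\alpha$ on $Q_R^-$, the bound \eqref{ancient solution growth} gives $\sup_{Q_R^-}|H|\le C R^\alpha$. Lemma~\ref{caloric liouville} then shows that $H$ is a caloric polynomial of parabolic degree at most $\lfloor\alpha\rfloor$. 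It remains to prove $H\equiv0$.

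This requires a normalization that is implicit in the intended application: $U$ is a blow-up at a point of $\Gamma$, so $U(0,0)=0$. Without it the statement fails, since $U=P+1$ satisfies all the hypotheses. Under this normalization, $(0,0)$ is a minimum point of $U\ge0$. The right-hand side of \eqref{ancient solution} is $C^{0,\gamma\beta}$, so $U$ is $C^{2,1}$ near the origin, exactly as in the proof of Corollary~\ref{c4.6}. Hence
$$
U(0,0)=0,\qquad D_xU(0,0)=0,\qquad D_x^2U(0,0)\ge0,\qquad U_t(0,0)\le0.
$$
Because the source vanishes at the origin, $\operatorname{tr}D^2_xU(0,0)-U_t(0,0)=0$. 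This forces $D_x^2U(0,0)=0$ and $U_t(0,0)=0$. Since $P$ vanishes to order $\alpha>2$ at the origin, the parabolically homogeneous components $H_0,H_1,H_2$ of $H$ all vanish. So $H=\sum_{3\le j\le\lfloor\alpha\rfloor}H_j$.

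Next I would argue by contradiction. Suppose $H\not\equiv0$, and let $m\ge3$ be the smallest index with $H_m\not\equiv0$. If $m<\alpha$, rescale:
$$
r^{-m}U(rx,r^2t)=H_m+o(1)+r^{\alpha-m}P(x)\longrightarrow H_m\quad\text{as } r\to0^+,
$$
locally uniformly. Therefore $H_m$ is a nonnegative caloric function on $\R^n\times(-\infty,0]$ with polynomial growth. By the Liouville theorem for nonnegative ancient caloric functions (Widder representation, or a Souplet--Zhang type result), $H_m$ is constant. Being homogeneous of degree $m>0$, it must vanish, a contradiction.

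The main obstacle is the borderline case $m=\alpha$, which can occur only when $\alpha$ is an integer. Then $H=H_\alpha$ is homogeneous, and the rescaling above only returns $U$ itself. In this case I would use the half-space $\{x\cdot\nu<0\}$, where $P\equiv0$ and hence $H_\alpha=U\ge0$ for all $t\le0$. First, restrict $H_\alpha$ to a point $x$ with $x\cdot\nu<0$ and let $t\to-\infty$. Writing $H_\alpha$ as a polynomial in $t$, this forces the top power of $t$ to have a sign, and by homogeneity and the heat equation this pins down the lower-order coefficients. Second, combine this with $H_\alpha\ge -P$ on $\{x\cdot\nu>0\}$ and with the vanishing at the origin of all derivatives of order at most $2$. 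The goal is to show that a nonzero caloric homogeneous polynomial satisfying these constraints would become negative somewhere, for instance along $x=-s\nu+y$ with $y\perp\nu$ as $t\to-\infty$. This step is where I am least certain; if the direct argument fails, I would try a boundary Harnack or Liouville argument for nonnegative caloric functions in the half-space vanishing to high order at the origin.
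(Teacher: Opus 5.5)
Your reduction matches the paper's: $H=U-P$ is caloric, $\sup_{Q_R^-}|H|\le CR^\alpha$, and Lemma~\ref{caloric liouville} makes $H$ a caloric polynomial of parabolic degree at most $\lfloor\alpha\rfloor$. The genuine gap is the borderline case $\alpha\in\mathbb N$, $H=H_\alpha$. This is the only nontrivial part of the theorem, and you leave it as a sketch that does not close. Write $H(x,t)=(-t)^{\alpha/2}h(x/\sqrt{-t})$ with $h:=H(\cdot,-1)$. Then the available information is exactly that $h\ge0$ on the half-space $\{x\cdot\nu\le0\}$, and letting $t\to-\infty$ at a fixed $x$ only probes $h$ near the origin. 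For instance, with $\nu=e_1$, the degree-$4$ caloric polynomial $x_1^4+12x_1^2t+12t^2$ is positive as $t\to-\infty$ at every fixed $x$. Yet $h(y)=y_1^4-12y_1^2+12$ is negative at $y_1=-\sqrt6$, so only global information on $h$ excludes it. The constraint $H\ge-P$ on $\{x\cdot\nu>0\}$ is not what is needed. A boundary Harnack principle is not directly available either, since $H=U$ need not vanish on $\{x\cdot\nu=0\}$.

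The missing idea is spectral. Since $H$ is caloric and homogeneous, $h$ solves $\Delta h-\frac12x\cdot Dh+\frac\alpha2h=0$. This is an eigenfunction equation for the Ornstein--Uhlenbeck operator, which is self-adjoint in Gaussian $L^2$.
With $\nu=e_1$, average out $x'$ against $d\mu_{n-1}$ and set $g(s):=\int_{\R^{n-1}}h(s,x')\,d\mu_{n-1}(x')$. Then:
\begin{itemize}
\item $g''-\frac s2g'+\frac\alpha2g=0$;
\item $g\ge0$ for $s\le0$;
\item $g(-s)=(-1)^\alpha g(s)$, by homogeneity of the polynomial $H$ and symmetry of the Gaussian.
\end{itemize}
If $\alpha$ is even, then $g\ge0$ on $\R$, and pairing with the eigenfunction $1$ gives $\frac\alpha2\int g\,d\mu_1=0$. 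If $\alpha$ is odd, then $sg(s)\le0$, and pairing with the eigenfunction $s$ gives $\frac{\alpha-1}{2}\int sg\,d\mu_1=0$; this is where $\alpha\ge3$ is used. Hence $g\equiv0$. So for each $s<0$ the nonnegative function $h(s,\cdot)$ has zero Gaussian mean and therefore vanishes. The polynomial $h$ then vanishes on an open half-space, so $h\equiv0$ and $H\equiv0$.

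Separately, your claim that the statement fails without the normalization $U(0,0)=0$ is a misreading. The bound is assumed for every $R>0$, including $R\to0$, so $U=P+1$ is not admissible: $\sup_{Q_R^-}U\ge1>CR^\alpha$ for small $R$. That small-scale bound gives $\sup_{Q_r^-}|H|\le Cr^\alpha$ as $r\to0$, which directly kills every homogeneous component of degree less than $\alpha$ (the paper's Case 1). Your $C^{2,1}$ analysis at the origin and the Widder/Souplet--Zhang step are correct but unnecessary.
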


\begin{proof}
By Remark \ref{rafa function}, we consider $P(x)=c_{\alpha,\beta}(x\cdot\nu)_+^\alpha$ and $H=U-P$,  where $H$ is ancient and caloric. Furthermore, for each $R>0$,
$$
    \sup_{Q_R^-}|H|\le CR^\alpha.
$$
In particular,
\begin{equation}\label{H growth}
    \sup_{Q_r^-}|H|\le Cr^\alpha,\quad\text{ as } r\to0.
\end{equation}
By Lemma~\ref{caloric liouville}, $H$ is a caloric polynomial with parabolic degree at most $\lfloor \alpha\rfloor$. 

\medskip

\textsc{Case 1.} Suppose first that $\lfloor\alpha\rfloor<\alpha$. Assuming $H\not\equiv0$, consider $m$ the smallest parabolic degree of a nonzero term $H_m$. Then, $m\le\lfloor\alpha\rfloor<\alpha$, and
$$
    r^{-m}H(rx,r^2t)\to H_m(x,t),\quad\text{as}\ r\to0.
$$
The latter, recalling \eqref{H growth}, gives
$$
    \sup_{Q_1^-}|r^{-m}H(rx,r^2t)|\le Cr^{\alpha-m}\to0,
$$
which contradicts $H_m\not\equiv0$.

\medskip

\textsc{Case 2.} Assuming $\alpha\in\mathbb N$, since $\alpha>2$, we have $\alpha\ge3$. From \eqref{H growth}, every parabolically homogeneous term of $H$ of degree strictly smaller than $\alpha$ is identically zero. Therefore, either $H\equiv0$, or $H$ is a parabolic polynomial of degree $\alpha$, i.e.,
\begin{equation}\label{H homogeneous}
    H(rx,r^2t)=r^\alpha H(x,t).
\end{equation}
We show that the latter is not possible. After a rotation, we may assume that $\nu=e_1$ in the definition of $P$. Since $U\ge0$, and
$$
    P=0\quad\text{ on }\ \{x_1\le0\},
$$
whenever $x_1\le0$ and $t\le0$, we obtain $H(x,t)\ge0$. Setting
$$
h(x):=H(x,-1),
$$
the parabolic homogeneity of $H$ yields
$$
H(x,t)=(-t)^{\frac{\alpha}{2}}
h\left(\frac{x}{\sqrt{-t}}\right),
\qquad t<0.
$$
Since $\Delta H-H_t=0$, direct differentiation
gives
\begin{equation}\label{h equation}
\Delta h-\frac12x\cdot Dh+\frac{\alpha}{2}h=0
\quad\text{in }\R^n.
\end{equation}
Moreover,
\begin{equation}\label{h positive}
        h(x)\ge0\quad\text{in }\; x_1\le0.
\end{equation}
Write $x=(s,x')\in\R\times\R^{n-1}$, set
$$
d\mu_{n-1}(x'):=(4\pi)^{-(n-1)/2}e^{-|x'|^2/4}\,dx',
$$
and define
$$
g(s):=\int_{\R^{n-1}}h(s,x')\,d\mu_{n-1}(x').
$$
Since $h$ is a polynomial, so is $g$. Moreover, \eqref{h positive} implies that
\begin{equation}\label{g positive1}
    g(s)\geq0,\quad \mbox{for }\; s\leq0.
\end{equation}
Observe that
$$
\int_{\R^{n-1}}
\left(\Delta_{x'}h-\frac12x'\cdot D_{x'}h\right)
\,d\mu_{n-1}=0,
$$
therefore, integrating \eqref{h equation} and using integration by parts, gives
\begin{equation}\label{g equation}
    g''(s)-\frac{s}{2}g'(s)
    +\frac{\alpha}{2}g(s)=0.
\end{equation}
We claim that $g\equiv0$. Indeed, suppose that
$g\not\equiv0$. Comparing the highest-order
terms in \eqref{g equation}, we conclude that
$g$ is a polynomial of degree $\alpha$. Moreover, since $H$ is a parabolically homogeneous polynomial of degree
$\alpha\in\mathbb N$, we can conclude that $h(-x)=(-1)^\alpha h(x)$. Thus, 
$$
g(-s)=(-1)^\alpha g(s).
$$
Suppose first that $\alpha$ is even. Then $g$ is even, and \eqref{g positive1} yields
$$
g\geq0\qquad\text{in }\R.
$$
Set
$$
\mathscr L_1:=\frac{d^2}{ds^2}-\frac{s}{2}\frac{d}{ds},
$$
and \eqref{g equation} can be rewritten as
$$
\mathscr L_1g=-\frac{\alpha}{2}g.
$$
Since $\mathscr L_1$ is self-adjoint in $L^2(d\mu_1)$, 
and $\mathscr L_1 1=0$, we obtain
$$
-\frac{\alpha}{2}\int_{\R}g\,d\mu_1=\int_{\R}g\,\mathscr L_1 1\,d\mu_1=0.
$$
Since $g\ge0$ and $d\mu_1$ has a strictly positive density, we conclude that $g\equiv0$.

Finally, if $\alpha$ is odd, then \eqref{g positive1} gives
$$
    g(s)\le0, \quad\text{for}\ s\ge0,
$$
therefore, 
\begin{equation}\label{nonpositive}
     sg(s)\le0, \quad\text{for}\  s\in\R.
\end{equation}
Since $\mathscr L_1s=-s/2$, and $\mathscr L_1$ is self-adjoint, then
$$
-\frac\alpha2\int_{\R}sg(s)\,d\mu_1=\int_\R s\mathscr L_1g\,d\mu_1=\int_\R g\mathscr L_1s\,d\mu_1=-\frac12\int_\R sg(s)\,d\mu_1.
$$
Since $\alpha\ge3$, this identity gives
$$
    \int_\R sg(s)\,d\mu_1=0;
$$
together with \eqref{nonpositive}, it implies $g\equiv0$. 

Thus, $g\equiv0$ in both cases. On the other hand, for each $s<0$, one has 
$$
h(s,x')\ge0 \quad \mbox{and} \quad
0=g(s)=\int_{\R^{n-1}}h(s,x')\,d\mu_{n-1}(x'),
$$
hence, $h(s,x')=0$, for each  $x'\in\R^{n-1}$ and $s<0$. This means that the polynomial $h$ vanishes in the open half-space $\{x_1<0\}$, and therefore $h\equiv0$, implying $H\equiv0$, and the result follows.
\end{proof}

\subsection*{Blow-up classification}
\label{s8}

We conclude this section by establishing the equivalence of regular points associated with the two components along the joint free boundary. More precisely, we show that the existence of a half-space blow-up for either component
uniquely determines the joint blow-up on $\R^n\times\R$, whose components exhibit the intrinsic homogeneities dictated by $\gamma$ and $\theta$ and share a common positivity half-space.

\begin{theorem}\label{joint blowup}
If $(u,v)\ge0$ is a bounded solution of \eqref{NewSystem}, and $z_0=(x_0,t_0)\in\Gamma\cap\mathcal R_v$, then there exists a unique $\nu=\nu(z_0)\in\mathbb S^{n-1}$ such that
$$
(u_{r,z_0},v_{r,z_0})\longrightarrow
\bigl(c_{\alpha,\beta}(x\cdot\nu)_+^\alpha,
      c_\beta(x\cdot\nu)_+^\beta\bigr),
$$
locally uniformly in $\R^n\times\R$ as $r\downarrow0$.
In particular, the two limiting profiles have the same
positivity half-space $\{x\cdot\nu>0\}$ and the common
interface $\{x\cdot\nu=0\}$.
\end{theorem}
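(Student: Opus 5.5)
The plan is to combine Lemma~\ref{blow-up limit} with the Liouville Theorem~\ref{rigidity}, and then use uniqueness of the limit to upgrade subsequential convergence to full convergence as $r\downarrow0$.

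First, since $z_0\in\Gamma\cap\mathcal R_v$, the results of \cite{AFSW22} quoted before Lemma~\ref{blow-up limit} give a unique direction $\nu=\nu(z_0)$ such that $v_{r,z_0}\to c_\beta(x\cdot\nu)_+^\beta$ locally uniformly on $\R^n\times\R$ along the full family $r\downarrow0$. This fixes $\nu$ and settles the second component.

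Second, fix any sequence $r_j\downarrow0$. By Lemma~\ref{blow-up limit}, a subsequence of $u_{r_j,z_0}$ converges locally uniformly to some $U\ge0$. This limit solves \eqref{the RHS} on $\R^n\times\R$ and satisfies the growth bound \eqref{globalgrowth}. Restricting $U$ to $\R^n\times(-\infty,0]$ gives a nonnegative solution of \eqref{ancient solution} with $\sup_{Q_R^-}U\le CR^\alpha$. Theorem~\ref{rigidity} then yields $U=c_{\alpha,\beta}(x\cdot\nu)_+^\alpha$ for $t\le0$.

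The main obstacle is extending this identification to positive times, since Theorem~\ref{rigidity} is stated only for ancient solutions. I would handle it with the function $H:=U-P$ from Remark~\ref{rafa function}, which is caloric on all of $\R^n\times\R$ and satisfies $|H|\le CR^\alpha$ on two-sided cylinders $Q_R$.

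\begin{itemize}
\item By Lemma~\ref{caloric liouville} applied on each half-line translate, $H$ is a caloric polynomial. Concretely, for any $T>0$, the shifted function $H(\cdot,\cdot+T)$ is ancient on $\R^n\times(-\infty,0]$ with growth $C(1+R^\alpha)$, so it is a polynomial; hence $H$ is a polynomial in $(x,t)$.
\item The previous step shows $H\equiv0$ on $\{t\le0\}$.
\item A polynomial vanishing on an open set vanishes identically, so $H\equiv0$ on $\R^n\times\R$.
\end{itemize}

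An alternative argument is forward uniqueness for the Cauchy problem of the heat equation in the polynomial-growth class, with zero data at $t=0$. Either route gives $U=c_{\alpha,\beta}(x\cdot\nu)_+^\alpha$ on all of $\R^n\times\R$.

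Finally, every sequence $r_j\downarrow0$ has a subsequence along which $u_{r_j,z_0}$ converges to the same limit $c_{\alpha,\beta}(x\cdot\nu)_+^\alpha$. A standard subsubsequence argument therefore gives convergence of the whole family $u_{r,z_0}$ as $r\downarrow0$, locally uniformly. Combined with the first step, this proves the joint convergence stated in the theorem.

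Uniqueness of $\nu$ follows because the limit profile $c_\beta(x\cdot\nu)_+^\beta$ determines $\nu$. The common positivity half-space $\{x\cdot\nu>0\}$ and the common interface $\{x\cdot\nu=0\}$ can be read off directly from the explicit profiles.
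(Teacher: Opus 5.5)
Your proposal is correct and follows the paper's proof essentially step for step: a subsequential limit via Lemma~\ref{blow-up limit}, Theorem~\ref{rigidity} on $\{t\le0\}$, extension of $H=U-P$ to positive times, and a subsubsequence argument for convergence of the full family. Your primary forward-extension route is also valid and slightly more self-contained than the paper's appeal to forward uniqueness in the polynomial-growth class: you apply Lemma~\ref{caloric liouville} to the time-shifted $H$ and use that a polynomial vanishing on $\{t<0\}$ vanishes identically.
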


\begin{proof}
Given any sequence $r_j\downarrow0$, by Lemma~\ref{blow-up limit}, we can extract a subsequence such that
$(u_{r_j,z_0},v_{r_j,z_0})\to(U,V)$, locally uniformly on
$\R^n\times\R$, with $V=c_\beta(x\cdot\nu)_+^\beta$, for a uniquely determined $\nu$. By Theorem \ref{rigidity}, applied to $U$ on $t\le0$,
$$
U(x,t)=c_{\alpha,\beta}(x\cdot\nu)_+^\alpha
\qquad(t\le0).
$$
In the whole space-time, the difference
$H:=U-c_{\alpha,\beta}(x\cdot\nu)_+^\alpha$
is caloric and has polynomial growth by \eqref{globalgrowth}. It vanishes for $t\le0$; uniqueness of the forward heat equation in the polynomial-growth class yields $H\equiv0$, also for $t>0$. Every subsequential limit is therefore the same pair,
which proves convergence of the full rescaled family.
\end{proof}

Conversely, at points of $\Gamma\cap\mathcal R_u$, the half-space blow-up of the density defect determines the nutrient profile through the first equation of the system. The nutrient equation then determines its coefficient, yielding the universal half-space profile for $v$ without an additional Liouville argument.

\begin{proposition} \label{equivalence regular points}
If $(u,v)\ge0$ is a bounded solution of \eqref{NewSystem}, then 
$$
\mathcal R_u\subset\Gamma\quad\mbox{and}\quad\mathcal R_u=\Gamma\cap\mathcal R_v.
$$
In particular, every regular point of the density defect free boundary is a joint free boundary point. Moreover, at every such point, the full joint blow-up is uniquely determined and is given by Theorem~\ref{joint blowup}.
\end{proposition}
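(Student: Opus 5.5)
We prove three things in order: $\mathcal R_u\subset\Gamma$, then $\mathcal R_u\subset\mathcal R_v$, and finally the reverse inclusion $\Gamma\cap\mathcal R_v\subset\mathcal R_u$.

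\emph{Step 1: $\mathcal R_u\subset\Gamma$.} Let $z_0\in\mathcal R_u$ and suppose $z_0\notin\Gamma$. Since $z_0\in\Gamma_u$, Corollary~\ref{coupling corollary} gives $v(z_0)>0$. By Corollary~\ref{c4.6},
$$
\sup_{Q_r(z_0)}u\le Cr^2\bigl(v^\gamma(z_0)+r^\gamma\bigr).
$$
The $C^{2+\gamma,1+\gamma/2}$ Taylor expansion gives more precisely
$$
u_{r,z_0}(x,t)=r^{2-\alpha}\Bigl(\tfrac12\langle A_{z_0}x,x\rangle+O(r^\gamma)\Bigr),
\qquad \operatorname{tr}A_{z_0}=v^\gamma(z_0)>0.
$$
Because $\alpha>2$, the factor $r^{2-\alpha}$ blows up, so $u_{r_j,z_0}$ is unbounded on $Q_1$. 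This contradicts local uniform convergence to $A(x\cdot\nu)_+^\alpha$. Hence $z_0\in\Gamma$.

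\emph{Step 2: $\mathcal R_u\subset\mathcal R_v$.} Let $z_0\in\mathcal R_u\subset\Gamma$ with sequence $r_j$. By Proposition~\ref{two sided growth}, after passing to a further subsequence, $(u_{r_j,z_0},v_{r_j,z_0})\to(U,V)$ locally uniformly on $\R^n\times\R$, with $U=A(x\cdot\nu)_+^\alpha$ and $V\ge0$ of growth $R^\beta$. Passing to the limit in the first equation,
$$
V^\gamma=\Delta U-\partial_tU=A\alpha(\alpha-1)(x\cdot\nu)_+^{\alpha-2}.
$$
Since $\alpha-2=\gamma\beta$, this gives $V=\bigl(A\alpha(\alpha-1)\bigr)^{1/\gamma}(x\cdot\nu)_+^\beta$. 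The convergence of the second equation also has to be justified: $v^\theta$ passes to the limit uniformly because $v_{r_j}\to V$ uniformly and $s\mapsto s^\theta$ is uniformly continuous. So $V$ solves $\Delta V-V_t=V^\theta$. Plugging $b(x\cdot\nu)_+^\beta$ into this equation forces $b\,\beta(\beta-1)=b^\theta$, i.e. $b=c_\beta$ (note $b>0$ since $A>0$). Thus $V=c_\beta(x\cdot\nu)_+^\beta$ along $r_j$, and $z_0\in\mathcal R_v$. Consistency then forces $A=c_{\alpha,\beta}$.

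\emph{Step 3: $\Gamma\cap\mathcal R_v\subset\mathcal R_u$.} This follows from Theorem~\ref{joint blowup}: at such points the full family $u_{r,z_0}$ converges to $c_{\alpha,\beta}(x\cdot\nu)_+^\alpha$ with $c_{\alpha,\beta}>0$. The uniqueness of the joint blow-up at points of $\mathcal R_u$ is then also Theorem~\ref{joint blowup}, applied via Step 2.

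\emph{Main obstacle.} The delicate point is Step 1. One must exclude regular points in the quadratic regime and make sure the argument uses two-sided cylinders, consistent with Definition~\ref{regular points u}. The Taylor expansion above handles this directly, provided we note that a nonzero $A_{z_0}$ produces a blow-up of order $r^{2-\alpha}$ on the unit cylinder.
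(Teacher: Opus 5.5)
Your proposal is correct and follows essentially the paper's argument. To get $\mathcal R_u\subset\Gamma$, you set the Taylor expansion at a point of $\Gamma_u$ against the superquadratic scaling $\alpha>2$; you argue by contradiction via $v(z_0)>0$, whereas the paper deduces $D_x^2u(z_0)=0$ and invokes Corollary~\ref{c4.7}, which amounts to the same thing. The remaining steps match the paper: the nutrient limit is identified through the first equation, its constant is fixed by the second, and Theorem~\ref{joint blowup} gives the reverse inclusion and uniqueness. One small citation point: $v(z_0)>0$ for $z_0\in\Gamma_u\setminus\Gamma$ follows directly from Proposition~\ref{coupling prop} ($\Gamma=\Gamma_u\cap\{v=0\}$), not from Corollary~\ref{coupling corollary} alone.
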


\begin{proof}
We first show that $\mathcal R_u\subset\Gamma$.
Let $z_0=(x_0,t_0)\in\mathcal R_u$.
By Definition~\ref{regular points u}, there
exist $A>0$, $\nu\in\mathbb S^{n-1}$, and
$r_j\downarrow0$ such that
$$
u_{r_j,z_0}(x,t)
\longrightarrow A(x\cdot\nu)_+^\alpha,
$$
locally uniformly in $\mathbb R^n\times\mathbb R$. Since $\alpha>2$, we have
$$
\frac{u(x_0+r_jx,t_0)}{r_j^2}
=
r_j^{\alpha-2}u_{r_j,z_0}(x,0)
\longrightarrow0,
$$
locally uniformly in $\mathbb R^n$. On the other hand, since $z_0\in\Gamma_u$,
Corollary~\ref{c4.6} gives
$$
\frac{u(x_0+r_jx,t_0)}{r_j^2}
\longrightarrow
\frac12\langle D_x^2u(z_0)x,x\rangle.
$$
Consequently, $D_x^2u(z_0)=0$. Recalling the characterization
$$
\Gamma=\Gamma_u\cap\{D_x^2u=0\}
$$
from Corollary~\ref{c4.7}, we conclude that $z_0\in\Gamma$.

Proposition \ref{coupling prop} gives $\Gamma\subset\Gamma_u\cap\Gamma_v$. Since $c_{\alpha,\beta}>0$, Theorem \ref{joint blowup} gives $\Gamma\cap\mathcal R_v\subset\Gamma\cap\mathcal R_u$. Suppose now $z_0\in\Gamma\cap\mathcal R_u$, and choose the sequence in Definition \ref{regular points u}, so that
$$
u_j:=u_{r_j,z_0}\longrightarrow U:=A(x\cdot\nu)_+^\alpha,
$$
locally uniformly. Set $v_j:=v_{r_j,z_0}$. Then 
$$
\Delta u_j-\partial_tu_j=v_j^\gamma,\qquad\Delta v_j-\partial_tv_j=v_j^\theta.
$$
For every fixed $R>0$, Proposition~\ref{two sided growth} gives
$$
\sup_{Q_R}v_j=r_j^{-\beta}\sup_{Q_{r_jR}(z_0)}v\le CR^\beta,
$$
for all sufficiently large $j$. The same estimate on larger
cylinders bounds $v_j^\theta$. Interior parabolic estimates and a diagonal argument therefore yield a subsequence with $v_j\to V\ge0$, locally uniformly on $\R^n\times\R$. Thus,
$$
\Delta U-\partial_tU=V^\gamma, \qquad\Delta V-\partial_tV=V^\theta.
$$
The first equation yields
$$
V^\gamma=A\alpha(\alpha-1)(x\cdot\nu)_+^{\alpha-2}.
$$
Using $\gamma>0$, $V\ge0$, and $\alpha-2=\gamma\beta$, we obtain
$$
V=C_0(x\cdot\nu)_+^\beta,
\qquad C_0=[A\alpha(\alpha-1)]^{1/\gamma}>0.
$$
These identities also hold at $t=0$ by continuity. On the open half-space $\{x\cdot\nu>0\}$, substitution into the second equation and the fact that $\beta-2=\theta\beta$ yield
$$
C_0\beta(\beta-1)=C_0^\theta.
$$
Hence, we get $C_0=c_\beta$ and, similarly, $A=c_\beta^\gamma/(\alpha(\alpha-1))=c_{\alpha,\beta}$.

The extracted subsequence is therefore a nutrient half-space blow-up, so $z_0\in\mathcal R_v$. This proves the reverse inclusion. Uniqueness of the direction and convergence of the full joint family now follow from Theorem \ref{joint blowup}.
\end{proof}

In view of Proposition \ref{equivalence regular points}, we define the \emph{common regular set} by
\begin{equation}\label{common regular set}
\mathcal R:= \mathcal R_u=\Gamma\cap\mathcal R_v.
\end{equation}

\begin{remark}\label{backward two sided equivalence}
At a joint free boundary point, requiring convergence in $\R^n\times\R$ in Definitions \ref{regular points} and \ref{regular points u} does not exclude points that would be regular under the earlier \emph{backward} subsequential definition. Indeed, suppose $v_{r_j,z_0}$ tends to a nutrient half-space profile on $t\le0$. Proposition \ref{two sided growth} gives two-sided subsequential compactness. Any two-sided limit agrees with that stationary profile for $t\le0$ and hence for $t\ge0$ by the uniqueness of the scalar forward equation 
$$V_t-\Delta V=-V^\theta$$ 
in the polynomial-growth class. Thus, the same sequence has a two-sided half-space limit. Similarly, suppose that $u_{r_j,z_0}$ tends to $A(x\cdot\nu)_+^\alpha$, $A>0$, on $t\le0$. After extracting a two-sided limit of $v_{r_j,z_0}$, the first equation forces the nutrient limit on $t\le0$ to equal $C(x\cdot\nu)_+^\beta$, where $C=[A\alpha(\alpha-1)]^{1/\gamma}$. The nutrient equation forces $C=c_\beta$, and forward uniqueness for the nutrient equation yields the same profile on $t\ge0$. The first equation then uniquely determines the forward density-defect limit, because its difference from the stationary half-space profile is caloric and vanishes at $t=0$. Consequently, the two-sided and backward subsequential notions agree along $\Gamma$.
\end{remark}

\section{Regularity of the joint free boundary} \label{s10}

In this section, we study the regularity of the joint free
boundary near points of the common regular set $\mathcal R$. Our main result establishes that, at these points, the joint free boundary locally coincides with the nutrient free boundary in a backward parabolic neighborhood. Hence, the joint interface inherits the spatial $C^{1,\sigma}$ and temporal $C^{0,1/2}$ regularity of the nutrient interface obtained in \cite{AFSW22}.

\subsection*{The backward zero cone}\label{s9} We derive a quantitative vanishing criterion for the nutrient equation, which allows us to construct backward non-tangential cones where $v$ vanishes identically.
Within these regions, the density defect $u$ is caloric. Exploiting its superquadratic growth estimate through a suitable parabolic barrier, we show that $u$ must also vanish in the nutrient-depleted phase near the joint boundary.

\begin{lemma}\label{lemma1}
If $(u,v)\ge0$ is a bounded solution of \eqref{NewSystem}, and for some $\overline{Q_r^-(z)}\subset Q_1$, one has
$$
\sup_{Q_r^-(z)}v\leq \frac{c_*}{2}r^\beta,
$$
where $c_*>0$ and $\beta>0$ are defined in
\eqref{lower estimate for barrier} and
\eqref{definition of beta}, respectively, then
$$
v\equiv0\quad\text{in} \ Q_{r/8}^-(z).
$$
\end{lemma}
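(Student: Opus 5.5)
The plan is to argue by contradiction, reusing the barrier of \eqref{the barrier function} in the same way as in the proof of Theorem~\ref{non-degeneracy}. Suppose $v$ does not vanish identically in $Q_{r/8}^-(z)$. Then there is a point $z_1=(x_1,t_1)\in Q_{r/8}^-(z)$ with $v(z_1)>0$.

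\textbf{Step 1: set up a backward cylinder at $z_1$ of radius $r/2$.} Since $|x_1-x|<r/8$ and $t-r^2/64<t_1\le t$, we have $B_{r/2}(x_1)\subset B_r(x)$ and $t_1-r^2/4>t-r^2$. Hence $\overline{Q_{r/2}^-(z_1)}\subset Q_r^-(z)\cup\partial_pQ_r^-(z)$, and by continuity
$$
\sup_{\overline{Q_{r/2}^-(z_1)}}v\le \frac{c_*}{2}r^\beta.
$$

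\textbf{Step 2: rescale and compare.} Rescale at $z_1$ with radius $\rho=r/2$ by setting $\tilde v(y,s):=\rho^{-\beta}v(x_1+\rho y,t_1+\rho^2 s)$. This function solves the same nutrient equation in $Q_1^-$ by the scaling invariance, and it satisfies
$$
\tilde v\le \frac{c_*}{2}\,2^\beta=2^{\beta-1}c_*.
$$
This is where the difficulty lies. The factor $2^\beta$ breaks the direct comparison with $\psi\ge c_*$ on $\partial_pQ_1^-$. To fix it, I would use a dilated barrier instead. Choose $\lambda\in(1/8,1)$, for instance $\lambda=1-1/8=7/8$. The cylinder $Q_{\lambda r}^-(z_1)$ lies in $Q_r^-(z)$ only if $|x_1-x|+\lambda r\le r$, which holds for $\lambda\le 7/8$. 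The time condition also holds, since $t_1-\lambda^2r^2\ge t-r^2/64-49r^2/64>t-r^2$. Rescaling with $\rho=\lambda r$ gives
$$
\tilde v\le \tfrac12 c_*\lambda^{-\beta}=\tfrac12(8/7)^\beta c_*.
$$
This is still not $\le c_*$ for large $\beta$.

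The cleaner route is to adjust the barrier to the available room. On $\partial_pQ_1^-$ we have $\sqrt{|y|^2-s}\ge1$, and the argument of Theorem~\ref{non-degeneracy} only needs $\psi$ large on the parabolic boundary while $\psi(0,0)=0$. So I would use $\psi_\lambda(y,s):=m(\sqrt{|y|^2-s}-\tfrac1{8\lambda})_+^\beta$ in the scale of $Q_r^-(z)$ itself. Equivalently, I would work unrescaled at $z_1$ with
$$
\Psi(y,s):=m\Bigl(\sqrt{|y-x_1|^2-(s-t_1)}-\tfrac r8\cdot k\Bigr)_+^\beta,
$$
with $k$ chosen so that $\Psi$ vanishes near $z_1$ and $\Psi\ge\tfrac{c_*}{2}r^\beta$ on $\partial_pQ_r^-(z)$. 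On that boundary, $\sqrt{|y-x_1|^2-(s-t_1)}\ge 7r/8$, so with $k=1$ we get $\Psi\ge m(3r/4)^\beta=c_*r^\beta$. The supersolution computation is scale-invariant because of the choice of $\beta$, so $\Psi$ is a supersolution of the second equation; the smallness of $m$ is unchanged after scaling, except that the $\Delta s$ term now involves $1/s\le 8/r$, which is controlled on $\{h>0\}$.

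\textbf{Step 3: conclude.} Apply the comparison principle in $Q_r^-(z)$: $v\le\Psi$ on $\partial_pQ_r^-(z)$ because $v\le\tfrac{c_*}2r^\beta$ there (by continuity up to the boundary), so $v\le\Psi$ in $Q_r^-(z)$. Since $\Psi(z_1)=0$ (indeed $\Psi$ vanishes in a neighbourhood of $z_1$), this contradicts $v(z_1)>0$. Since $z_1$ was an arbitrary point of $Q_{r/8}^-(z)$ with $v(z_1)>0$, we get $v\equiv0$ in $Q_{r/8}^-(z)$.

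The main obstacle is the bookkeeping of constants: matching the barrier's boundary lower bound $c_*r^\beta$ against the hypothesis $\tfrac{c_*}{2}r^\beta$, and checking that the supersolution inequality survives the recentred barrier. I would verify the latter by checking $|\Delta s|\le (n-1)/s$ and $s\ge r/8$ on the support of $\Psi$, so that the extra terms are absorbed by choosing $m$ small, exactly as in the derivation of the inequality $\Delta\psi-\psi_t\le\psi^\theta$.
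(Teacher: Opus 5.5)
Your recentred-barrier argument is correct in spirit, but Step~3 does not work as written.

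\textbf{The flaw in Step~3.} You compare $v$ with $\Psi$ on all of $Q_r^-(z)$, yet $z_1$ may lie strictly before the top time $t$ of that cylinder. For $t_1<s\le t$, the quantity $\sigma:=\sqrt{|y-x_1|^2-(s-t_1)}$ misbehaves in three ways. First, it is undefined where $|y-x_1|^2<s-t_1$. Second, on the lateral boundary you only get $\sigma>\sqrt3\,r/2$, which is smaller than $7r/8$, so $\Psi\ge c_*r^\beta$ is not justified there; in fact $m(\sqrt3/2-\tfrac18)^\beta<\tfrac m2(\tfrac34)^\beta$ once $\beta$ is large. Third, $|D\sigma|^2=|y-x_1|^2/\sigma^2>1$, so the computation leading to $Cm\le m^\theta$ with $C=\beta(n+\beta-\tfrac12)$ no longer applies with the same $m$.

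\textbf{The fix.} Compare on the truncated cylinder $B_r(x)\times(t-r^2,t_1]$. Its parabolic boundary lies in $\overline{Q_r^-(z)}$, where $v\le\tfrac{c_*}{2}r^\beta$ by continuity. Write $h:=\sigma-\tfrac r8$. Since $s\le t_1$ throughout, you have $|D\sigma|\le1$, $h\,\Delta\sigma\le n$ and $h/(2\sigma)\le\tfrac12$. These are the only facts the supersolution computation uses; your bound $1/\sigma\le 8/r$ plays no role. On the lateral part $\sigma\ge|y-x_1|>7r/8$, and on the bottom $\sigma^2\ge t_1-t+r^2>\tfrac{63}{64}r^2$. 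Comparison then gives $v(z_1)\le\Psi(z_1)=0$ directly, so no contradiction argument is needed.

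\textbf{Comparison with the paper.} With this repair your route is correct, but the paper never recentres. It rescales at $z$ itself ($r=1$, $z=(0,0)$) and uses the single barrier $\psi$ of \eqref{the barrier function}. Its dead core $\{\sqrt{|x|^2-t}\le\tfrac14\}$ already contains all of $Q_{1/8}^-$, because $|x|^2-t<\tfrac{2}{64}$ there and $\sqrt2/8<\tfrac14$. Since $v\le\tfrac{c_*}{2}<c_*\le\psi$ on $\partial_pQ_1^-$, one comparison gives $v\le\psi=0$ on $Q_{1/8}^-$. This observation makes your detours through the factors $2^\beta$ and $(8/7)^\beta$ in Steps~1--2 unnecessary.

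What your pointwise family of barriers buys is a little extra room. With offset $k=0$ and $z_1$ ranging over $Q^-_{r/4}(z)$, the same truncated comparison gives $v\equiv0$ on the larger cylinder $Q^-_{r/4}(z)$. Nothing later in the paper needs this.
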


\begin{proof}
By translation and scaling, it is enough to consider $r=1$ and $z=(0,0)$. Let $\psi$ be the barrier function defined in
\eqref{the barrier function}. As established in
Section \ref{s5}, it satisfies
$$
\begin{array}{rlr}
\Delta\psi-\psi_t\leq\psi^\theta
& \text{in} & Q_1^-, \\
\psi\geq c_*
 & \text{on} & \partial_pQ_1^-.
\end{array}
$$
Consequently, $v \leq\psi$ on $\partial_pQ_1^-$. By the parabolic comparison principle, $v\leq\psi$ in $Q_1^-$. On the other hand, by the construction of the barrier, $\psi\equiv0$ in $Q_{1/8}^-$. Since $v\geq0$, we derive
$$
v\equiv0\quad\text{in} \ Q_{1/8}^-,
$$
as desired.
\end{proof}

Next, we establish the existence of a backward zero cone
at regular nutrient points.

\begin{lemma}\label{lemma2}
If $(u,v)\ge0$ is bounded solution of \eqref{NewSystem}, and $z_0=(x_0,t_0)\in\Gamma\cap\mathcal R_v$, then for each $\varepsilon\in(0,1)$, there exists
$\rho_\varepsilon>0$ such that $v(x,t)=0$ whenever the following conditions
$$
t\leq t_0,\qquad
0<d(x,t)<\rho_\varepsilon, \qquad \mbox{and} \quad 
-(x-x_0)\cdot\nu(z_0)\geq\varepsilon d(x,t)
$$
hold, where $d(x,t):=d_p((x,t),z_0)$.
\end{lemma}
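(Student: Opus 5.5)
The plan is to combine the uniform convergence of the nutrient blow-ups at $z_0$, given by Theorem~\ref{joint blowup}, with the quantitative vanishing criterion of Lemma~\ref{lemma1}, applied on small backward cylinders centered at points of the cone. Write $\nu=\nu(z_0)$. Since $z_0\in\Gamma\cap\mathcal R_v$, Theorem~\ref{joint blowup} (or directly the uniqueness of the nutrient blow-up) gives
\[
v_{r,z_0}\longrightarrow V:=c_\beta(x\cdot\nu)_+^\beta
\]
locally uniformly on $\R^n\times\R$ as $r\downarrow0$. In particular, for every $\eta>0$ there is $r_\eta>0$ such that $\sup_{Q_2}|v_{r,z_0}-V|\le\eta$ for all $r<r_\eta$.

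Fix a point $(x,t)$ satisfying the three conditions, and set $r:=d(x,t)$ and normalized coordinates $(y,s):=((x-x_0)/r,(t-t_0)/r^2)$. Then $d_p((y,s),0)=1$, $s\le0$, and $y\cdot\nu\le-\varepsilon$. Choose a small $\lambda=\lambda(\varepsilon)\in(0,1)$, for instance $\lambda=\varepsilon/2$, and look at the backward cylinder $Q_\lambda^-(y,s)$. Every point $(y',s')$ in it satisfies $y'\cdot\nu\le-\varepsilon+\lambda<0$, so $V\equiv0$ there. The cylinder is also contained in $Q_2$, since $s'\le s\le0$ and $|s'|\le1+\lambda^2$. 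Hence
\[
\sup_{Q_\lambda^-(y,s)}v_{r,z_0}\le\eta.
\]
Take $\eta:=\frac{c_*}{2}\lambda^\beta$. By the scaling invariance of the second equation, $v_{r,z_0}$ is again a bounded nonnegative solution, so Lemma~\ref{lemma1}, applied in scaled form to $v_{r,z_0}$ on $Q_\lambda^-(y,s)$, gives $v_{r,z_0}\equiv0$ on $Q_{\lambda/8}^-(y,s)$. In particular $v_{r,z_0}(y,s)=0$. Scaling back gives $v(x,t)=0$, and we set $\rho_\varepsilon:=r_\eta$.

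Two points need checking. The first is that the inclusion $\overline{Q^-_{\lambda}}\subset$ domain required by Lemma~\ref{lemma1} holds: after scaling, the requirement is $\overline{Q^-_{r\lambda}(x,t)}\subset Q_1$. This follows once $\rho_\varepsilon$ is small relative to $\dist_p(z_0,\partial Q_1)$. The lemma also only uses the comparison principle on that cylinder, so applying it to the rescaled function is legitimate. The second point is where the main subtlety lies. Lemma~\ref{lemma1} needs smallness on the whole backward cylinder, so the cone condition must leave room for a cylinder of radius comparable to $r$ that stays strictly inside $\{x\cdot\nu<0\}$ and inside the region where the uniform convergence holds. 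The choice $\lambda=\varepsilon/2$ secures this, and $\eta$, and hence $\rho_\varepsilon$, depend only on $\varepsilon$ through $\lambda$.

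No contradiction argument is needed: the uniform convergence on the single compact set $\overline{Q_2}$ handles all points of the cone at once, because every point normalizes to the unit parabolic sphere, which is compact.
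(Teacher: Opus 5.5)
Your proof is correct and takes essentially the same route as the paper. Uniform convergence of the nutrient blow-ups at $z_0$ makes $v_{r,z_0}$ small on backward cylinders inside $\{x\cdot\nu<0,\ t\le0\}$; Lemma~\ref{lemma1} turns this smallness into exact vanishing; and normalizing cone points to a compact piece of a parabolic sphere gives a uniform $\rho_\varepsilon$. The only cosmetic difference is that the paper first proves vanishing on arbitrary compact subsets of $\{x\cdot\nu<0,\ t\le0\}$, with a compactness-chosen cylinder radius $\delta$, and rescales by $r=2d(x,t)$, whereas you fix the explicit radius $\lambda=\varepsilon/2$ and rescale by $r=d(x,t)$.
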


\begin{proof}
By translation and rotation, we may assume that
$z_0=(0,0)$ and $\nu(z_0)=e_1$. Since $z_0\in\mathcal R_v$, the uniqueness of the nutrient blow-up yields
\begin{equation}\label{0 is a regular point}
v_r(x,t):=r^{-\beta}v(rx,r^2t)
\longrightarrow c_\beta(x_1)_+^\beta,
\end{equation}
locally uniformly in $\R^n\times(-\infty,0]$ as $r\to0^+$.

\medskip

\textsc{Step 1.}  We claim that for every compact set
$$
K\subset\{(x,t):x_1<0,\ t\leq0\},
$$
there exists $r_K>0$ such that
$$
v_r\equiv0\quad\text{on} \ K,
$$
for every $0<r<r_K$. 

Indeed, choose $\delta>0$ so small that
$$
K':=\bigcup_{z\in K}\overline{Q_\delta^-(z)}
$$
is a compact subset of $\{x_1<0,\ t\leq0\}$. Since the limiting profile in \eqref{0 is a regular point} vanishes identically on $K'$, local uniform convergence gives
$$
\sup_{K'}v_r\longrightarrow0
\qquad\text{as }r\to0^+.
$$
Consequently, for all sufficiently small $r>0$,
$$
\sup_{Q_\delta^-(z)}v_r
\leq\frac{c_*}{2}\delta^\beta,
\qquad z\in K.
$$
Applying Lemma \ref{lemma1} to $v_r$ in each cylinder
$Q_\delta^-(z)$, we obtain
$$
v_r\equiv0\quad\text{in} \ Q_{\delta/8}^-(z),\quad z\in K.
$$
In particular, $v_r(z)=0$, for every $z\in K$,
which proves the claim.

\medskip

\textsc{Step 2.}  Next, we construct the backward zero cone. Fix $\varepsilon\in(0,1)$ and consider
$$
K_\varepsilon:=\left\{(x,t)\in\R^n\times(-\infty,0]:
d_p((x,t),(0,0))=\frac12,\ x_1\leq-\frac{\varepsilon}{2}
\right\}.
$$
Clearly, $K_\varepsilon$ is a compact subset of $\{x_1<0,\ t\leq0\}$. By Step 1, there exists $r_\varepsilon>0$ such that
$$
v_r\equiv0\quad\text{on} \ K_\varepsilon
$$
whenever $0<r<r_\varepsilon$. Set $\rho_\varepsilon:=r_\varepsilon/2$. Let $(x,t)$ satisfy
$$
t\leq0,\quad0<d(x,t)<\rho_\varepsilon,\quad-x_1\ge\varepsilon d(x,t),
$$
where $d(x,t)=d_p((x,t),(0,0))$. Taking $r:=2d(x,t)$, we have $0<r<r_\varepsilon$,
$$
d_p\left(\left(\frac{x}{r},\frac{t}{r^2}\right),(0,0)
\right)=\frac12, \quad \mbox{and} \quad -\frac{x_1}{r}
\geq\frac{\varepsilon d(x,t)}{2d(x,t)}=\frac{\varepsilon}{2}.
$$
Therefore,
$$
\left(\frac{x}{r},\frac{t}{r^2}\right)
\in K_\varepsilon.
$$
By the definition of $v_r$ and the vanishing property established above,
$$
v(x,t)=r^\beta v_r\left(\frac{x}{r},\frac{t}{r^2}\right)=0.
$$
This proves the result.
\end{proof}

\subsection*{Regularity of the joint free boundary}

We conclude this section by establishing the regularity of the joint free boundary near points in $\mathcal R$. More precisely, we show that the joint and nutrient free boundaries coincide locally in a backward parabolic neighborhood, and consequently, the joint interface is $C^{1,\sigma}$ in space and half-H\"older in time. 

The following result adapts \cite[Theorem 6.5]{AFSW22} to our setting.

\begin{theorem}\label{AFSW regularity}
If $(u,v)\ge0$ is a bounded solution of \eqref{NewSystem} and $z_0=(x_0,t_0)\in\Gamma\cap\mathcal R_v$, then there exist $\rho>0$ and $\sigma\in(0,1)$ such that its ordinary space-time nutrient free boundary $\partial\{v>0\}\cap Q_\rho(z_0)$ is, up to a spatial rotation,
a graph
$$
x_n=f(x',t),
$$
where $f$ is $C^{1,\sigma}$ in the spatial variables and
$C^{0,1/2}$ in time. 

Furthermore, on a smaller two-sided rectangular neighborhood $\mathcal B$ of $z_0$, orienting the normal toward the positive phase,
\begin{equation}\label{two sided nutrient phases}
\{v=0\}\cap\mathcal B=\{x_n\le f(x',t)\}\cap\mathcal B,
\quad\{v>0\}\cap\mathcal B=\{x_n>f(x',t)\}\cap\mathcal B.
\end{equation}
In particular, the trace of this graph in a backward cylinder represents the backward nutrient boundary $\Gamma_v$.
\end{theorem}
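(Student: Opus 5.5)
The plan is to reduce the statement to the scalar theory of \cite{AFSW22} for the nutrient equation $\Delta v-v_t=v^\theta$, which is decoupled from $u$. The first step is to check that $z_0\in\Gamma\cap\mathcal R_v$ satisfies the hypotheses under which \cite[Theorem 6.5]{AFSW22} applies. Our Definition~\ref{regular points} is two-sided and subsequential. By Remark~\ref{backward two sided equivalence}, it agrees with the backward subsequential notion of \cite[Definition 4.4]{AFSW22}. Uniqueness of the blow-up, from \cite[Proposition 4.5 and Theorem 6.4]{AFSW22}, then gives full convergence
$$
v_{r,z_0}\to c_\beta(x\cdot\nu)_+^\beta .
$$
Rotating so that $\nu=e_n$, I would invoke the flatness-implies-regularity mechanism of \cite[Theorem 6.5]{AFSW22}. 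The relevant facts are these.
\begin{itemize}
\item Relative openness of $\mathcal R_v$ \cite[Proposition 4.10]{AFSW22}, together with uniform closeness of $v_{r,z}$ to half-space solutions for $z$ near $z_0$ and $r$ small, gives a uniform flatness condition.
\item The resulting $C^{1,\sigma}$ improvement-of-flatness iteration shows that $\partial\{v>0\}\cap Q_\rho(z_0)$ is a graph $x_n=f(x',t)$.
\item The graph is $C^{1,\sigma}$ in $x'$ and $C^{0,1/2}$ in $t$, the latter coming from the parabolic scaling of the flatness estimates.
\end{itemize}
The only inputs needed are boundedness of $v$, the nondegeneracy \eqref{v nondegeneracy}, and the optimal growth of Theorem~\ref{optimal growth theorem}, which are exactly the scalar ingredients used there.

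The second step is the phase description \eqref{two sided nutrient phases} in a smaller box $\mathcal B=\{|x'-x_0'|<\rho_1,\ |x_n-x_{0,n}|<\rho_2,\ |t-t_0|<\rho_1^2\}$, where $\rho_2$ is chosen larger than the oscillation of $f$ over the box. Inside $\mathcal B$, the sets $\{x_n>f\}$ and $\{x_n<f\}$ are connected and contain no point of $\partial\{v>0\}$, so each lies entirely in $\{v>0\}$ or in $\operatorname{int}\{v=0\}$. To identify which is which:
\begin{itemize}
\item For the negative side, I would use the backward zero cone of Lemma~\ref{lemma2}: points $(x_0-se_n,t_0)$ with $s$ small lie in $\{v=0\}$. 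Hence $\{x_n<f\}\cap\mathcal B\subset\{v=0\}$.
\item For the positive side, the blow-up convergence gives $v(x_0+se_n,t_0)>0$ for small $s>0$. Hence $\{x_n>f\}\cap\mathcal B\subset\{v>0\}$.
\end{itemize}
The graph itself lies in $\{v=0\}$, since $v$ is continuous and vanishes on $\partial\{v>0\}$. This proves both identities in \eqref{two sided nutrient phases}.

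Finally, the backward trace. Each point $(x',f(x',t),t)$ of the graph has points with $x_n>f$, at slightly earlier times, inside every $Q_r^-$; this follows from continuity of $f$ and the openness of $\{x_n>f\}$. So the graph is contained in $\Gamma_v$. Conversely, any point of $\Gamma_v\cap\mathcal B$ is a zero of $v$ lying in the closure of $\{v>0\}$, hence on the graph.

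I expect the main obstacle to be the first step: verifying that the scalar regularity theorem of \cite{AFSW22} transfers verbatim. Their regular set is defined through backward blow-ups, and the uniform flatness must hold at all nearby free boundary points, not only at $z_0$. For this I would rely on Remark~\ref{backward two sided equivalence} together with the openness of $\mathcal R_v$. I would also check that their constants depend only on $n$, $\theta$ and $\|v\|_\infty$.
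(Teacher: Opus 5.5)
Your proposal is correct and, for the regularity claim, follows the paper's proof. Both check the hypotheses of \cite[Definition 4.4]{AFSW22}, namely vanishing order exactly $\beta$ (from Theorem~\ref{optimal growth theorem} and \eqref{v nondegeneracy}) plus the half-space blow-up, and then cite \cite[Theorem 6.5]{AFSW22}. Your appeal to Remark~\ref{backward two sided equivalence} is harmless but unnecessary, since two-sided convergence trivially restricts to backward convergence.

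The paper does not separately justify \eqref{two sided nutrient phases} or the backward-trace claim; it leaves them inside the citation. Your connectedness argument derives both correctly from the graph property alone: the two sides of the graph in $\mathcal B\subset Q_\rho(z_0)$ avoid $\partial\{v>0\}$, the lower side is pinned to $\operatorname{int}\{v=0\}$ by Lemma~\ref{lemma2} at $(x_0-se_n,t_0)$, and the upper side is pinned to $\{v>0\}$ by the blow-up at $(x_0+se_n,t_0)$.
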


\begin{proof}
Since $v\ge0$ and $\theta\in(0,1)$, the vector $\mathbf w=(v,0)$ solves the system studied in \cite{AFSW22}, for $q=\theta$ and $\kappa=\beta$. By the optimal growth and nondegeneracy estimates, $v$ has vanishing order precisely $\beta$ at every point of $\Gamma$. Moreover, assuming $z_0\in \mathcal{R}_v$, according to Definition \ref{regular points}, provides the remaining hypothesis in the definition of a regular point in \cite[Definition 4.4]{AFSW22}. The claimed regularity of the nutrient free boundary then follows directly from \cite[Theorem 6.5]{AFSW22}.
\end{proof}

Finally, we are ready to prove the regularity of the joint free boundary near points in $\mathcal R$. Although the nutrient interface $\Gamma_v$ is regular in a neighborhood of such points, this regularity does not automatically extend to the joint free boundary $\Gamma$. We overcome this difficulty by proving that the two interfaces coincide in a backward parabolic neighborhood, thereby establishing the claimed regularity of the joint free boundary on that backward neighborhood.
\begin{theorem}\label{joint FB regularity}
If $(u,v)\ge0$ is a bounded solution of \eqref{NewSystem} and $z_0\in\mathcal R$, then there exists $r_0>0$ such that
\begin{equation}\label{important coupling}
    \{v=0\}\cap Q_{r_0}^-(z_0)\subset\{u=0\}.
\end{equation}
Consequently, 
$$
\Gamma\cap Q_{r_0}^-(z_0)=\Gamma_v\cap Q_{r_0}^-(z_0),
$$
and the joint free boundary is $C^{1,\sigma}$ in the spatial variables, for some $\sigma\in(0,1)$, and $C^{0,1/2}$ in time.
\end{theorem}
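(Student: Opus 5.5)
The plan is to prove the inclusion \eqref{important coupling} by combining three earlier results. The graph structure of Theorem~\ref{AFSW regularity} describes where $v$ vanishes. The backward zero cone of Lemma~\ref{lemma2} controls $v$ near $z_0$. The superquadratic growth of Proposition~\ref{two sided growth} controls $u$. Once the inclusion holds, Propositions~\ref{coupling prop} and \ref{coupling lemma} identify $\Gamma$ with $\Gamma_v$ locally, and the regularity is inherited from Theorem~\ref{AFSW regularity}.

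\textbf{Step 1: setting up.} Fix $z_0\in\mathcal R$ and normalize $z_0=(0,0)$ and $\nu=e_n$. By Theorem~\ref{AFSW regularity}, in a two-sided box $\mathcal B$ the zero set $\{v=0\}$ is the subgraph $\{x_n\le f(x',t)\}$, with $f$ continuous and $f(0,0)=0$. Inside the open region $\mathcal O:=\{x_n<f(x',t)\}\cap\mathcal B$ we have $v\equiv0$, so $u$ is a nonnegative caloric function there.

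\textbf{Step 2: the main claim, $u\equiv0$ on the dead region.} This is the step I expect to be the main obstacle. At $\Gamma$-points $\zeta$ of the graph, $u(\zeta)=0$ and $u\le C\,d_p(\cdot,\zeta)^\alpha$ with $\alpha>2$. I would first try to show that every graph point in a small backward cylinder is a $\Gamma$-point. Let $\zeta\in\Gamma_v$ near $z_0$. Since $\mathcal R_v$ is relatively open, $\zeta\in\mathcal R_v$.

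To show $u(\zeta)=0$, argue by contradiction: suppose $u(\zeta)>0$. Then $\zeta\notin\Gamma$, and $v$ near $\zeta$ has the half-space blow-up. Consider the region $\mathcal O$ near $z_0$, where $u$ is caloric and nonnegative. At $z_0$ itself, $u=O(d^\alpha)$.

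The key is a barrier argument in the backward cone. Lemma~\ref{lemma2} gives $v\equiv0$ on the cone $\{-x_n\ge\varepsilon d,\ t\le0\}$. There $u$ is caloric, and the rescalings $u_r$ converge to $c_{\alpha,\beta}(x_n)_+^\alpha$, which is $0$ on $x_n<0$. So $u_r\to0$ uniformly on compact subsets of $\{x_n<0,\ t\le0\}$. Now I would mimic Step 1 of Lemma~\ref{lemma2}, but for $u$. There is no direct barrier forcing a caloric function to vanish, so instead use the first equation together with $v\equiv0$ there.

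The point to exploit is the following. If $u(\bar z)>0$ at some $\bar z\in\mathcal O$, the strong maximum principle gives $u>0$ on the backward connected portion of $\mathcal O$ below $\bar z$. Since $\mathcal O$ is a backward-connected subgraph region, this portion includes points in the cone arbitrarily close to $z_0$ in the past. So the plan is to compare $u$ in $\mathcal O\cap Q_r^-(z_0)$ against the growth bound. On the lateral graph part, use the boundary values $u\le Cd^\alpha$ at $\Gamma$-points. This requires first knowing that graph points are in $\Gamma$, which is circular.

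I would therefore argue instead by a Harnack chain. Suppose $u(\bar z)=a>0$ at some $\bar z\in\mathcal O\cap Q_{r_0}^-$ at parabolic distance $\sim\delta$ from the graph. Harnack along a chain inside $\mathcal O$ backward toward the cone near $z_0$ gives a lower bound $u\ge c(\delta)a$ at points of the cone at distance $\sim\delta$ from $z_0$. Letting $\bar z$ vary shows this is incompatible with $u\le Cd^\alpha$ only if the constants scale correctly. The cleanest route is to take a point where $u(\bar z)>0$ and use a caloric comparison on the time slab. I expect this to need care, and if it fails I would fall back on the following. Points $\zeta\in\Gamma_v\cap Q_{r_0}^-$ lie in $\mathcal R_v$, hence have blow-up $c_\beta(x\cdot\nu_\zeta)_+^\beta$. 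Then $u$ near $\zeta$ has second derivative satisfying $\operatorname{tr}D^2u(\zeta)=v^\gamma(\zeta)=0$ if $u(\zeta)=0$. If instead $u(\zeta)>0$, the caloric function $u>0$ on the whole dead component connected to the cone contradicts Step 1 of Lemma~\ref{lemma2} applied to $u_r\to0$, once a quantitative Harnack with the uniform Lipschitz/cone geometry of $f$ is used.

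\textbf{Step 3: conclusion.} With \eqref{important coupling}, points of $\Gamma_v\cap Q_{r_0}^-$ satisfy $u=0$, so by Proposition~\ref{coupling prop} they lie in $\Gamma$. The converse inclusion is Corollary~\ref{coupling corollary}. Thus $\Gamma\cap Q_{r_0}^-=\Gamma_v\cap Q_{r_0}^-$, which is the graph of $f$, and so $\Gamma$ is $C^{1,\sigma}$ in space and $C^{0,1/2}$ in time.
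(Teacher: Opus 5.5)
\paragraph{Gap in Step 2.} Your Steps 1 and 3 are fine, and Step 3 is how the paper concludes. The gap is Step 2, which carries the whole theorem: it is never established, and the tools you propose cannot establish it.

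First, the propagation direction is reversed. For a nonnegative caloric function, the strong maximum principle propagates \emph{zeros} into the past. So positivity at $\bar z$ spreads \emph{forward} in time along time-increasing paths, not over the backward-connected portion below $\bar z$.

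Second, and decisively, a Harnack chain from $\bar z$ into the backward cone at $z_0$ only gives $u\ge c\,a\,h^{K}$ at parabolic distance $h$ from $z_0$. Here $K\sim\log_2(1/c_H)$ is dictated by the Harnack constant, and nothing places it below $\alpha$. Similarly, $u_r\to0$ on compact subsets of $\{x_n<0\}$ only says $u=o(r^\alpha)$ there, which a positive caloric function can perfectly well satisfy. Your fallback (``quantitative Harnack with the cone geometry'') has the same defect.

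Finally, your intermediate goal of first proving that graph points lie in $\Gamma$ would not suffice anyway. A nonnegative caloric function vanishing on the graph can still be positive inside the dead region because of its values at earlier times.

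\paragraph{Missing idea.} You need a sharp subsolution whose growth exponent is strictly below $\alpha$. It is combined with forward propagation and with the growth bound at the single point $z_0\in\Gamma$; using growth only at $z_0$ is what removes the circularity you ran into. With $\nu=e_1$, the paper argues as follows.

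\begin{enumerate}
\item Suppose $u(y,s)>0$ at some point of the dead region $D=\{x_1<f\}$ near $z_0$, with $s<0$. Explicit time-increasing paths inside $D$ and the strong maximum principle give $u>0$ at every point of $D$ near $z_0$ with time in $(s,0]$.
\item Take $\omega=x_1^2-|x'|^2/(4n)+t$, so that $\Delta\omega-\omega_t=\frac{n+1}{2n}>0$. On $\{x_1<0,\ t\le0,\ \omega\ge0\}$ one has $-x_1\ge(1+4n)^{-1/2}d_p((x,t),z_0)$. By Lemma~\ref{lemma2}, this region lies in the depleted interior, where $u$ is caloric.
\item Set $\Omega_R=\{|x|<R,\ -R^2<t\le0,\ x_1<0,\ \omega>0\}$ with $R^2<-s$. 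Forward propagation gives $m_R:=\min u>0$ on the compact lateral piece $\{|x|=R,\ x_1\le0,\ \omega\ge0\}$. Since $\omega\le0$ at $t=-R^2$, comparison yields $u\ge (m_R/R^2)\,\omega$ in $\Omega_R$.
\item Hence $u(-he_1,0)\ge (m_R/R^2)h^2$. This contradicts $u(-he_1,0)\le C(2h)^\alpha$ from Theorem~\ref{optimal growth theorem} as $h\to0$, because $\alpha>2$.
\end{enumerate}

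This forces $u\equiv0$ in the dead region near $z_0$, and continuity then gives $u=0$ on the graph.
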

\begin{proof}
By \eqref{common regular set}, $z_0\in\mathcal R_v$. Recalling Definition \ref{regular points} and Theorem \ref{joint blowup}, we assume, without loss of generality, $z_0=(0,0)$ and $\nu= e_1$. 

\medskip

\textsc{Step 1.} Since $z_0\in\Gamma\cap\mathcal R_v$,
Theorem \ref{AFSW regularity} gives a continuous graph
$x_1=f(x',t)$ representing the nutrient boundary in a sufficiently
small backward neighborhood of the origin, with $f(0,0)=0$.
The graph is spatially $C^{1,\sigma}$, for some $\sigma\in(0,1)$,
and $C^{0,1/2}$ in time. We restrict the two-sided graph supplied by Theorem~\ref{AFSW regularity}
to $t\le0$. We choose $a$, $b$, $T>0$ small enough such that, in 
$$
B:=(-a,a)\times B_b'\times(-T,0],
$$
the interface is precisely that graph and $|f|<a/4$. 

\medskip

\textsc{Step 2.} The nutrient phases have already been
identified on both sides of the graph in
\eqref{two sided nutrient phases}. Restricting that identity to the backward cylinder $B$ in Step 1 gives
    \begin{equation}\label{10.2}
    \begin{aligned}
      \{v=0\}\cap B&=\{x_1\le f(x',t)\}\cap B,\\
      \{v>0\}\cap B&=\{x_1>f(x',t)\}\cap B.
    \end{aligned}
    \end{equation}

\medskip
    
\textsc{Step 3.} Set 
    $$
    D:=\{(x,t)\in B;\,x_1<f(x',t)\}.
    $$
    By \eqref{10.2}, $u\ge0$ is a caloric function in $D$. Define
    $$
    B_*:=(-a/2,a/2)\times B'_{b/2}\times(-T/2,0],
    $$
    and assume that there exists $(y,s)\in D\cap B_*$ such that $u(y,s)>0$, with $s<0$. We claim that, for every $(x,t)\in D\cap B_*\cap\{s<t\le0\}$, there is a path from $(y,s)$ to $(x,t)$ in $D$ whose time coordinate strictly increases. Indeed, for a short interval immediately after $s$, move from $y$ to $(-3a/4, y')$ by decreasing the first coordinate. During the middle interval, stay at the first coordinate $-3a/4$ and move tangentially to $x'$. During a short interval ending at $t$, move from $(-3a/4,x')$ to $x$. The strict graph inequalities at the endpoints and the continuity of $f$ preserve the first and last movements in $D$ when their time intervals are short. The middle part lies in $D$ at all times because $-3a/4<f$. Positivity propagates along this path. We then cover the compact path with finitely many spatial balls with short time intervals, arranged in time. In the first ball, choose a nonzero nonnegative smooth function supported in $\{u(\cdot,s)>0\}$ and bounded above by $u(\cdot,s)$. By the strong maximum principle, its zero-boundary heat evolution is positive in the ball at every later time and is bounded above by $u$, yielding $u(x,t)>0$. This argument also reaches a target at time zero using an interior ball over a backward time interval ending at zero.

\medskip

    \textsc{Step 4.} Next, we construct a quadratic subsolution with an exact zero cone. Namely, set
    $$
    \omega(x,t):=x_1^2-\frac{|x'|^2}{4n}+t.
    $$
    Direct calculation gives
    $$
    \Delta\omega-\omega_t=\frac{n+1}{2n}>0.
    $$
    For $x_1<0$, $t\le0$ and $\omega\ge0$, we have
    $$
    x_1^2\ge\frac{|x'|^2}{4n}-t,\quad-x_1\ge\frac{|x|}{(1+4n)^{1/2}}\quad\text{and}\quad-x_1\ge\sqrt{-t}.
    $$
    Hence, 
    \begin{equation}\label{10.3}
        -x_1\ge\frac{1}{(1+4n)^{1/2}}d(x,t).
    \end{equation}
    Using Lemma \ref{lemma2} with $\varepsilon=1/(2(1+4n)^{1/2})$, we conclude that, in a neighborhood of points satisfying \eqref{10.3}, one has $v\equiv0$, and hence, such points are in $D$ and not on its graph boundary, thanks to \eqref{10.2}.

    Next, we choose $R>0$ so small that $\overline{Q_R^-}\subset B_*$ and $R^2<-s$. Set 
    $$
    \Omega_R:=\{(x,t):|x|<R,\,-R^2<t\le0,\,x_1<0,\,\omega(x,t)>0\}\subset D.
    $$
    Then $u$ is caloric in $\Omega_R$, and
    $$
    \Gamma_R:=\{(x,t):|x|=R,\,-R^2\le t\le0,\,x_1\le0,\,\omega(x,t)\ge0\}
    $$
    is a compact subset of $D\cap B_*$. It cannot contain a point with $x_1=0$, since then $\omega\ge0$ would imply $x'=0$ and $t=0$, contradicting $|x|=R$. For $0<R<\rho_\varepsilon$, where $\rho_\varepsilon$ is as in Lemma \ref{lemma2}, the latter and \eqref{10.3} imply that the set $\Gamma_R\setminus\{(0,0)\}$ lies in the nutrient depleted interior. Observe also that since $R^2<-s$, every point in $\Gamma_R$ is later than $s$. Recalling Step 3 above, we conclude
    $$
    m_R:=\min_{\Gamma_R}u>0.
    $$
    Let 
    $$
    \eta:=\frac{m_R}{R^2}.
    $$
    Since $\omega\le R^2$ in $\Gamma_R$, then $\eta\omega\le u$. On $\omega=0$, the same inequality follows from $u\ge0$. At $t=-R^2$ there is no positive part of $\omega$ in $B_R$, as
    $$
    \omega(x,-R^2)\le|x|^2-R^2\le0.
    $$    
    In $\Omega_R\cap\{t\le\tau\}$, $\tau<0$, the function $F:=\eta\omega-u$ satisfies
    $$
    \Delta F-F_t=\frac{n+1}{2n}\eta>0,
    $$
    and is nonpositive on the parabolic boundary. If it had a positive maximum, that maximum would occur at a spatial interior point, yielding $\Delta F-F_t\le0$, a contradiction. Letting $\tau\to0-$, we conclude that   
    $$    
    u\ge\eta\omega\quad\text{in}\ \Omega_R.
    $$

\medskip

    \textsc{Step 5.} For $0<h<R$, the last inequality in Step 4 gives
    $$
    u(-he_1,0)\ge\eta h^2.
    $$
    On the other hand, by Theorem~\ref{optimal growth theorem} applied with $r=2h$, one has
    $$
    u(-he_1,0)\le C(2h)^\alpha,
    $$
    where $\alpha$ is defined by \eqref{definition of alpha} as before. Thus, 
    $$
    0<\eta\le C2^\alpha h^{\alpha-2},
    $$
    for every sufficiently small $h$, which is impossible as $h\to0$. Consequently, there is no point in $D\cap B_*$ where $u>0$. Hence, $u=0$ in $D\cap B_*$. Since each graph point can be approached by points strictly below the graph in $B_*$, by continuity, $u=0$ also on the graph. Recalling \eqref{10.2}, this proves \eqref{important coupling}. 

\medskip

    \textsc{Step 6.} To complete the proof, note that \eqref{important coupling} implies locally $\{u>0\}\subset\{v>0\}$, and $u=0$ on the nutrient interface. Hence Proposition \ref{coupling prop} gives, locally, $\Gamma=\Gamma_v$. The claimed regularity for the joint free boundary now follows from Theorem \ref{AFSW regularity}.
\end{proof}

\begin{remark}
Even though the system and the nutrient regularity theorem are formulated on two-sided cylinders, Theorem \ref{joint FB regularity} asserts coincidence only in $Q_{r_0}^-(z_0)$. On the other hand, the phase separation of the \emph{nutrient} interface in \eqref{two sided nutrient phases} is two-sided. Extending coincidence of \emph{the joint} interface to $Q_{r_0}(z_0)$ would require a separate argument.
\end{remark}

\begin{corollary}
    If $(u,v)\ge0$ is a bounded solution of \eqref{NewSystem}, and $z_0=(x_0,t_0)\in\mathcal R$, then
    $$
    \lim_{r\to0}\frac{|E\cap Q_r^-(z_0)|}{|Q_r^-|}=\lim_{r\to0}\frac{|E_{t_0}\cap B_r(x_0)|}{|B_r|}=\frac12,
    $$
    where $E$ is $\{u>0\}$, $\{v>0\}$ or $\{|(u,v)|>0\}$, and $E_{t_0}:=\{x;\,(x,t_0)\in E\}$.    
\end{corollary}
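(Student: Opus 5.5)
The plan is to reduce everything to the nutrient positivity set and then read off the density from the local graph description. By Theorem~\ref{joint FB regularity}, there is $r_0>0$ such that $\{v=0\}\cap Q_{r_0}^-(z_0)\subset\{u=0\}$. Combined with $\{v>0\}\subset\{|(u,v)|>0\}$, this gives
$$
\{u>0\}\cap Q_{r_0}^-(z_0)\subset\{v>0\}\cap Q_{r_0}^-(z_0)=\{|(u,v)|>0\}\cap Q_{r_0}^-(z_0).
$$
It then suffices to compute the limits for $E=\{v>0\}$ and to show that $\{v>0\}\setminus\{u>0\}$ has negligible relative density in $Q_r^-(z_0)$ and in $B_r(x_0)\times\{t_0\}$.

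\textbf{Step 1 (the set $\{v>0\}$).} We use \eqref{two sided nutrient phases} from Theorem~\ref{AFSW regularity}. After translating and rotating, take $z_0=0$ and $\nu=e_1$ (the graph variable is $x_1$ there). Near the origin, $\{v>0\}=\{x_1>f(x',t)\}$ with $f(0,0)=0$, $f$ of class $C^{1,\sigma}$ in $x'$ and $C^{0,1/2}$ in $t$. The blow-up of Theorem~\ref{joint blowup} identifies the normal, so $D_{x'}f(0,0)=0$.

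The temporal estimate is the delicate point, because half-H\"older continuity alone only gives $|f(0,t)|\le C|t|^{1/2}\sim Cr$, which is not $o(r)$. To get flatness I would argue through the blow-up instead. Rescaled sets $\{v_{r}>0\}\cap Q_1^-$ converge to $\{x_1>0\}$ in the sense that, by Lemma~\ref{lemma1} and nondegeneracy, $v_r\to c_\beta(x_1)_+^\beta$ locally uniformly. Lemma~\ref{lemma2} then shows $\{x_1<-\varepsilon\}\cap Q_1^-$ lies in $\{v_r=0\}$ for small $r$. Symmetrically, wherever $x_1>\varepsilon$ we have $v_r\ge c\varepsilon^\beta/2>0$ by uniform convergence. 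Hence the rescaled positivity set differs from $\{x_1>0\}$ only inside the slab $\{|x_1|\le\varepsilon\}$. Its density in $Q_1^-$ therefore tends to $1/2$, and the same slab argument gives density $1/2$ on the time slice $t=0$ in $B_1$.

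\textbf{Step 2 (the set $\{u>0\}$).} For $\{u>0\}$, uniform convergence $u_r\to c_{\alpha,\beta}(x_1)_+^\alpha$ from Theorem~\ref{joint blowup} gives $u_r>0$ on $\{x_1>\varepsilon\}\cap Q_1^-$ for small $r$. The inclusion in $\{v_r>0\}$ above gives $u_r=0$ on $\{x_1<-\varepsilon\}$. The same slab squeeze then yields density $1/2$, both in space-time and on the slice $t=t_0$.

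\textbf{Step 3 (the intrinsic norm).} Since $\{|(u,v)|>0\}=\{u>0\}\cup\{v>0\}$, this set coincides locally with $\{v>0\}$, so Step 1 covers it. Letting $\varepsilon\to0$ concludes.

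The main obstacle is avoiding a direct temporal flatness estimate for $f$. The blow-up squeeze sidesteps this: it needs only uniform convergence together with strict positivity of the limit profiles away from $\{x_1=0\}$, and the slab has measure $O(\varepsilon)$.
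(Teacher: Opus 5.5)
Your proof is correct and follows the paper's argument. Theorem~\ref{joint blowup} makes both rescaled components positive on $\{x_1>0\}$, the zero cone of Lemma~\ref{lemma2} makes $v_r$ vanish on $\{x_1<0\}$ for $t\le 0$, and \eqref{important coupling} transfers that vanishing to $u_r$ (including on the slice $t=t_0$). The only difference is cosmetic: you quantify the convergence with an explicit $\varepsilon$-slab squeeze, where the paper uses a.e.\ convergence of characteristic functions and dominated convergence. The graph/flatness discussion at the start of your Step 1 can simply be dropped. Also, $v_r\to c_\beta(x_1)_+^\beta$ comes from Theorem~\ref{joint blowup}, not from Lemma~\ref{lemma1} and nondegeneracy.
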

\begin{proof}
Without loss of generality, we may assume $z_0=(0,0)$ and $\nu=e_1$. At every point of $Q_1^-$ with $x_1>0$, Theorem \ref{joint blowup} yields $u_r>0$ and $v_r>0$, for $r>0$ sufficiently small. At every point with $x_1<0$, Step 1 of the proof of Lemma \ref{lemma2} gives $v_r=0$, and hence, by \eqref{important coupling}, also $u_r=0$. Thus, the characteristic functions of all three rescaled positivity sets converge almost everywhere on $Q_1^-$ to $\chi_{\{x_1>0\}}$. The Dominated Convergence Theorem then gives that the first limit is indeed $1/2$. The same argument, restricted to time zero and combined with the spatial Dominated Convergence Theorem, yields the second equality.
\end{proof}

\begin{remark}
    When $\theta=0$, the second equation in \eqref{NewSystem} is interpreted as the classical parabolic obstacle problem, for which a rich regularity theory has been developed (see, for example, \cite{B06, CPS04, FRS24, KN78, LM15, MR26}).
\end{remark}

\begin{center}
\begin{figure}[H]
\begin{tikzpicture}[
    scale=1.3,
    line cap=round,
    line join=round
]

\def\cebola{
    (0,1.6)
    .. controls (-0.10,0.85) and (-1.35,0.55)
       .. (-1.35,-0.45)
    .. controls (-1.35,-1.30) and (-0.65,-1.65)
       .. (0,-1.65)
    .. controls (0.65,-1.65) and (1.35,-1.30)
       .. (1.35,-0.45)
    .. controls (1.35,0.55) and (0.10,0.85)
       .. (0,1.6)
    -- cycle
}

\def\linha{
    (0,1.6)
    .. controls (0,2.05) and (0.15,2.40)
       .. (0.10,2.85)
}

\fill[gray!15] \cebola;
\fill[
    pattern=north east lines,
    pattern color=black
] \cebola;

\draw[gray!65, line width=4pt] \cebola;
\draw[gray!65, line width=4pt] \linha;

\draw[black, line width=0.8pt] \cebola;

\node[anchor=west] at (0.75,2.45)
    {$\Gamma_u\setminus \Gamma  \subset \{v>0\}$};
\draw[black, thin, ->]
    (0.8,2.25) -- (0.08,2.0);
    
\node[anchor=west] at (1.85,0.0)
{$\interior\{u=0\}\subset\interior\{v=0\}$ };
\draw[black, thin, ->]
    (1.9,-0.15) -- (0.55,-0.8);

\node[anchor=west] at (0.75,1.45)
    {$\Gamma$};
\draw[black, thin, ->]
    (0.9,1.25) -- (0.55,0.8);

\node at (3.6,-1.4) {$\{u>0\}\cap \{v>0\}$};
%

\draw[gray!35!black] 
(-3.7,3.2) -- (-3.7,-2) -- (5.6,-2) -- (5.6,3.2) -- (-3.7,3.2);

\draw[gray!100, fill=gray!80, opacity=0.3] (-1.0,0.4) circle (0.6cm);

\filldraw [-, white, opacity=0.7, dashed] (-1.0,0.4) -- (-1.6,0.5);

\filldraw[white!80] (-1.0,0.4) circle (0.6pt);

\filldraw[gray!80] (-2.3,1.8) circle (0.0pt) node[below] {{\small $\Gamma\cap Q_r^-(z_0)\subset\mathcal R$}};

\draw[black, thin, ->]
    (-1.4,1.35) -- (-1.03,0.45);

\end{tikzpicture}
\caption{An illustration near a common regular point
$z_0\in\mathcal R=\mathcal R_u=\Gamma\cap\mathcal R_v$. Regularity can be detected through a half-space blow-up of either component. By Theorem \ref{joint FB regularity}, $\Gamma\cap Q_r^-(z_0)=\Gamma_v\cap Q_r^-(z_0)$, and $\{v=0\}\cap Q_r^-(z_0)\subset\{u=0\}$, for $r>0$ small.}
\end{figure}
\end{center}

\medskip

{\small \noindent{\bf Acknowledgments.} This publication is based upon work supported by King Abdullah University of Science and Technology (KAUST) under Award No. ORFS-CRG12-2024-6430. DJA is supported by the Conselho Nacional de Desenvolvimento Cient\'\i fico e Tecnol\'ogico (CNPq), under grants 420014/2023-3 and 310542/2026-0.}

\medskip

{\small \noindent{\bf AI disclosure.} The mathematical ideas, statements, and proofs were developed entirely by the authors. AI tools were used to correct typographical errors and improve the language.}

\medskip

{\small \noindent{\bf Data availability statement.} Data sharing is not applicable to this article as no datasets were generated or analyzed during the current study.}

\end{document}